\documentclass[a4paper,12pt,reqno]{amsart}
\usepackage{amsmath,amsthm,amssymb}
\usepackage{hyperref}

\hypersetup{unicode,bookmarksnumbered=true,hidelinks,final}

\theoremstyle{definition}
\newtheorem{theorem}{Theorem}[section]
\newtheorem{definition}[theorem]{Definition}
\newtheorem{proposition}[theorem]{Proposition}
\newtheorem{lemma}[theorem]{Lemma}
\newtheorem{corollary}[theorem]{Corollary}
\newtheorem{remark}[theorem]{Remark}

\setlength{\oddsidemargin}{0.25in}
\setlength{\evensidemargin}{0.25in} 
\setlength{\textwidth}{6in}
\setlength{\topmargin}{-0.2in} 
\setlength{\textheight}{8.8in}

\numberwithin{equation}{section}
\makeatletter
\let\c@equation\c@theorem
\makeatother

\begin{document}

\title[]{The Canonical Metric on Holomorphic Pairs\\ over Compact Non-K\"{a}hler Manifolds}
\author{Ryoma Saito}
\date{}
\address{Department of Mathematics, Institute of Science Tokyo, 2-12-1, O-okayama, Meguro, 152-8551, Japan}
\email{saito.r.ai@m.titech.ac.jp, saito.ryoma.7195@gmail.com}

\thanks{\\
{\it{Mathematics Subject Classification}} (2020) 53C07, 53C55}

\begin{abstract}
  In this paper, we prove the solvability of the vortex equation on a holomorphic vector bundle over a compact Hermitian manifold using the continuity method, and show the Kobayashi-Hitchin correspondence for holomorphic pairs. This work extends Bradlow's Kobayashi-Hitchin correspondence over compact K\"{a}hler manifolds to compact non-K\"{a}hler manifolds.
\end{abstract}

\maketitle

\section{Introduction}
For holomorphic vector bundles over compact complex manifolds, the equivalence between the stability of vector bundles and the existence of canonical metrics is known as the Kobayashi-Hitchin correspondence. The canonical metric here is a solution of the curvature equation, specifically the Hermitian-Einstein metric:
\begin{gather*}
i\Lambda_g F_h-\lambda\mathrm{id}_E=0,~\lambda\in\mathbb{R}.
\end{gather*}

The proof of the Kobayashi-Hitchin correspondence developed through several key contributions. Kobayashi \cite{MR556302, MR664562, MR909698} and Lübke \cite{MR701206} first showed that the stability of vector bundles follows from the existence of a canonical metric. Later, Donaldson \cite{MR710055, MR765366, MR885784} and Uhlenbeck-Yau \cite{MR861491} proved that a stable vector bundle admits a canonical metric by taking different differential geometric approaches. Donaldson used the heat flow method, while Uhlenbeck-Yau used the continuity method.

After the establishment of this Kobayashi-Hitchin correspondence, the following extensions have been studied:
\begin{enumerate}
  \item Extension to compact Gauduchon manifolds (which are, by definition, $n$-dimensional compact complex manifolds with a Hermitian metric $\omega_g$ satisfying $\partial\bar{\partial}\omega_g^{n-1}=0$).
  \item Extension to vector bundles with additional structures (Higgs fields or holomorphic sections) over compact K\"{a}hler manifolds.
\end{enumerate}
Li-Yau \cite{MR915839} demonstrated the first extension using the continuity method. For the second extension, the case with Higgs fields was developed by Simpson \cite{MR944577}, and the case with holomorphic sections was investigated by Bradlow \cite{MR1085139}. In particular, Bradlow defined the notion of the canonical metric as a solution of the vortex equation and the stability for holomorphic pairs $(E,\phi)$ (consisting of a holomorphic vector bundle and a holomorphic section) over compact K\"{a}hler manifolds. He then proved the Kobayashi-Hitchin correspondence for holomorphic pairs using the heat flow method. The vortex equation is given as follows, and its solution $h$ is called a $\tau$-Hermitian-Einstein metric.
\begin{gather*}
  i\Lambda_g F_h+\frac{1}{2}\phi\otimes\phi^*_h-\frac{\tau}{2}\mathrm{id}_E=0,~\tau\in\mathbb{R}
\end{gather*}
His motivation for considering holomorphic pairs came from the study of the Higgs bundles and also the similarity between the vortex equation and the equation giving critical points of the Yang-Mills-Higgs functional \cite{MR1086749}. In these previous studies, the base manifold is always assumed to be K\"{a}hler.

In this paper, we study the Kobayashi-Hitchin correspondence for vector bundles with additional structures over compact Gauduchon manifolds, generalizing Bradlow's theorem to non-K\"{a}hler manifolds. The main theorem is as follows.
\begin{theorem}\label{main-theorem}
  If a holomorphic pair $(E,\phi)$ over an $n$-dimensional compact Gauduchon manifold $(X,\omega_g)$ is $\tau$-stable, then it admits a unique $\tau$-Hermitian-Einstein metric.
\end{theorem}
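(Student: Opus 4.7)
The plan is to adapt the Uhlenbeck--Yau continuity method, as extended to the Gauduchon setting by Li--Yau, to Bradlow's vortex equation. Fix a smooth background Hermitian metric $h_0$ on $E$ and write every competitor as $h=h_0 e^s$ with $s$ self-adjoint with respect to $h_0$. Consider the one-parameter family
\begin{gather*}
  P_\epsilon(h) \;:=\; i\Lambda_g F_h + \tfrac{1}{2}\phi\otimes\phi^*_h - \tfrac{\tau}{2}\mathrm{id}_E + \epsilon\, s \;=\; 0,
  \qquad \epsilon\in[0,1],
\end{gather*}
and let $J:=\{\epsilon\in[0,1] : P_\epsilon(h)=0 \text{ admits a smooth solution}\}$. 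The goal is to show $0\in J$ by proving $J$ is non-empty, open, and closed. Solvability at $\epsilon=1$ is the starting point: the strictly positive zero-order term $s$ makes the natural variational functional coercive, so a smooth minimizer exists and satisfies the equation.

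For openness, I would linearize $P_\epsilon$ at a solution $h_\epsilon$ in the direction of a self-adjoint endomorphism. The principal part is the Hodge Laplacian on $\mathrm{End}(E)$, while the zero-order part has a strictly positive contribution $\epsilon\cdot\mathrm{id}$ from the regularizing term together with a non-negative one from $\phi\otimes\phi^*_{h}$. This operator is thus an isomorphism between appropriate Sobolev spaces, and the implicit function theorem yields openness of $J$ in $(0,1]$.

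The hard part is closedness, which reduces to a uniform $C^0$ bound on $s_\epsilon=\log(h_0^{-1}h_\epsilon)$; once this is established, $F_h$ is controlled in $L^p$ and higher regularity follows by standard elliptic bootstrapping (using that $\phi$ is fixed). Since $(X,\omega_g)$ is only Gauduchon, the usual Bochner-type identities and integration-by-parts formulas pick up error terms involving $\partial\omega_g^{n-1}$ and $\bar\partial\omega_g^{n-1}$. To absorb these I would systematically pair quantities with $\omega_g^{n-1}$ and invoke $\partial\bar\partial\omega_g^{n-1}=0$ to kill the offending terms, so that the Gauduchon degree is well defined on any torsion-free coherent subsheaf and the Donaldson--Bradlow-type functional remains monotone along the continuity path.

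The crux is extracting the $C^0$ estimate from $\tau$-stability in the spirit of Uhlenbeck--Yau. Arguing by contradiction, suppose $\|s_{\epsilon_i}\|_{C^0}\to\infty$ along a subsequence $\epsilon_i\to\epsilon_\infty$. Normalize $\sigma_i:=s_{\epsilon_i}/\|s_{\epsilon_i}\|_{L^2}$, establish a uniform $L^2_1$-bound, and pass to a weak limit $\sigma_\infty$ with non-constant spectrum. The spectral projectors of $\sigma_\infty$ produce a weak holomorphic subbundle $F\subset E$, and one must distinguish whether $\phi$ lies in $F$ or not, giving the two types of subobjects Bradlow uses to test $\tau$-stability. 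A careful computation of the limiting integral of $P_{\epsilon_i}(h_{\epsilon_i})$ paired with $\sigma_i$, using the Gauduchon identities mentioned above, then shows that $(F,\phi|_F)$ $\tau$-destabilizes $(E,\phi)$, contradicting the hypothesis. Uniqueness follows by a standard convexity argument for the Donaldson--Bradlow functional. The principal technical obstacle is handling the $\phi\otimes\phi^*_h$ term: it is nonlinear in $h$ and not scale-invariant, so one must carefully track how it interacts both with the Gauduchon correction terms in the integration by parts and with the two different $\tau$-slope inequalities required of the limiting weak subbundle.
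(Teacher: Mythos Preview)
Your outline follows the same continuity-method architecture as the paper (perturbed equation, openness via implicit function theorem, closedness via a priori estimates, and the Uhlenbeck--Yau blow-up producing a destabilizing subsheaf), but it glosses over exactly the two points where the paper does real work.

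First, you repeatedly invoke the Donaldson--Bradlow functional: for solvability at $\epsilon=1$ (``coercive variational functional''), for monotonicity along the path, and for uniqueness (``standard convexity argument''). On a merely Gauduchon manifold this functional is not available in the form you need; that is precisely why Bradlow's heat-flow proof does not extend and why the paper adopts the continuity method in the first place. The paper handles $\epsilon=1$ trivially by choosing $h_0=h\cdot e^{K_h^0}$ so that $f=e^{-K_h^0}$ is an explicit solution, and it proves uniqueness by a completely different route: $\tau$-stability implies $\phi$-simplicity (any holomorphic endomorphism annihilating $\phi$ must vanish), and a Bochner-type computation on $\mathrm{End}\,E$ then forces two $\tau$-Hermitian--Einstein metrics to coincide.

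Second, and this is the paper's main technical point, you do not address the simplicity problem. In the Li--Yau argument the simplicity of $E$ is used (i) to control $\frac{d}{d\epsilon}f_\epsilon$ and thereby obtain closedness of $J$ in $(0,1]$, and (ii) to upgrade the uniform $C^0$ bound on $\log f_\epsilon$ to an $L^p_2$ bound as $\epsilon\to 0$. But $\tau$-stability of the pair $(E,\phi)$ does \emph{not} imply that $E$ itself is simple, so following Li--Yau verbatim fails. The paper replaces (i) by a direct maximum-principle estimate on $\eta=f^{-1/2}\varphi f^{-1/2}$ that needs no simplicity, and replaces (ii) by first proving $C^0$ convergence of $f_\epsilon$ and then invoking Donaldson's Lemma~19 to get the $L^p_2$ bound from the $C^0$ limit together with the uniform curvature bound. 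Your sentence ``once this is established, $F_h$ is controlled in $L^p$ and higher regularity follows by standard elliptic bootstrapping'' is exactly where this gap is hidden.
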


It can be shown by standard arguments that the existence of a canonical metric implies the stability of the vector bundle. Therefore, we obtain the following Kobayashi-Hitchin correspondence.

\begin{theorem}
  For a holomorphic pair $(E,\phi)$ over an $n$-dimensional compact Gauduchon manifold $(X,\omega_g)$, the following (1) and (2) are equivalent.
  \begin{itemize}
    \item [(1)] $(E,\phi)$ admits a $\tau$-Hermitian-Einstein metric.
    \item [(2)] $(E,\phi)$ satisfies either (a) or (b).
    \begin{itemize}
      \item [(a)] $(E,\phi)$ is $\tau$-stable,
      \item [(b)] $E$ decomposes holomorphically as $E=E_\phi\oplus E^{\prime\prime}$, where $\phi\in E_\phi$ is $\tau$-stable, and $E^{\prime\prime}$ is polystable with $\mu(E^{\prime\prime})=\frac{\tau}{4\pi}\mathrm{Vol}_g(X)$.
    \end{itemize}
  \end{itemize}
\end{theorem}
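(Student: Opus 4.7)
The plan is to establish the two implications separately, observing that $(2)(a) \Rightarrow (1)$ is precisely Theorem \ref{main-theorem}.

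For the remaining part of $(2) \Rightarrow (1)$, I would build the $\tau$-Hermitian-Einstein metric blockwise from the decomposition $E = E_\phi \oplus E''$. Theorem \ref{main-theorem} applied to the $\tau$-stable pair $(E_\phi, \phi)$ produces a $\tau$-Hermitian-Einstein metric $h_\phi$ on $E_\phi$, while the Li-Yau Kobayashi-Hitchin theorem on compact Gauduchon manifolds \cite{MR915839}, applied to the polystable bundle $E''$, yields a Hermitian-Einstein metric $h''$ whose Einstein constant is forced to equal $\tau/2$ by the slope hypothesis $\mu(E'') = \tau\,\mathrm{Vol}_g(X)/(4\pi)$. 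Since $\phi \in H^0(E_\phi)$ has no $E''$-component, the endomorphism $\phi \otimes \phi^*_h$ lives entirely in $\mathrm{End}(E_\phi)$, so $h := h_\phi \oplus h''$ satisfies the vortex equation blockwise on $(E,\phi)$.

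For the converse $(1) \Rightarrow (2)$, I would follow the classical Kobayashi-L\"ubke pattern adapted to pairs. Suppose $h$ is a $\tau$-Hermitian-Einstein metric and $(E,\phi)$ is not $\tau$-stable, so some proper coherent subsheaf $F \subsetneq E$ with torsion-free quotient achieves equality in the relevant $\tau$-slope inequality. The Chern-Weil degree identity on a Gauduchon manifold expresses $\deg(F)$ as (a normalized) integral of $\mathrm{tr}(\pi_F\cdot i\Lambda_g F_h) - |\bar{\partial}\pi_F|_h^2$, where $\pi_F$ denotes the weak orthogonal projection onto $F$; substituting the vortex equation for $h$ and using the equality of $\tau$-slopes forces $\bar{\partial}\pi_F \equiv 0$. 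Uhlenbeck-Yau's regularity theorem then promotes $\pi_F$ to a smooth projection, exhibiting $F$ as a holomorphic subbundle whose $h$-orthogonal complement is also holomorphic. A dichotomy on whether $\phi \in H^0(F)$ yields the decomposition in (b): taking $E_\phi$ to be the smallest $h$-orthogonally-split subbundle containing $\phi$, its complement $E''$ inherits a classical Hermitian-Einstein metric with constant $\tau/2$, hence is polystable of slope $\tau\,\mathrm{Vol}_g(X)/(4\pi)$, and minimality forces $(E_\phi,\phi)$ itself to be $\tau$-stable.

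The main obstacle is the transfer of the Uhlenbeck-Yau weak-subbundle regularity and the Chern-Weil degree formula to the non-K\"ahler setting. The key is that the Gauduchon condition $\partial\bar{\partial}\omega_g^{n-1} = 0$ is exactly what makes $\deg(F) = \int_X c_1(F,h|_F) \wedge \omega_g^{n-1}/(n-1)!$ well-defined independently of $h$, and what allows the integration by parts underlying the Chern-Weil identity to go through; this is the same analytic ingredient that powers the continuity method used to prove Theorem \ref{main-theorem}.
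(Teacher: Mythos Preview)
Your proposal is correct and matches the paper's approach: the paper does not give a self-contained proof of this theorem but assembles it from Theorem~\ref{main-theorem} for $(2)(a)\Rightarrow(1)$, implicitly from Li--Yau \cite{MR915839} together with Theorem~\ref{main-theorem} for $(2)(b)\Rightarrow(1)$, and for $(1)\Rightarrow(2)$ simply states (as Theorem~2.3) that Bradlow's argument \cite[Theorem~2.1.6]{MR1085139} carries over to the Gauduchon setting. Your sketch of the $(1)\Rightarrow(2)$ direction via the Chern--Weil formula and Uhlenbeck--Yau regularity is precisely that Bradlow-type argument, and your observation that the Gauduchon condition is exactly what is needed for the degree formula and integration by parts is the correct justification for the non-K\"ahler extension.
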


This theorem can be regarded either as Li-Yau's Kobayashi-Hitchin correspondence with an added holomorphic section, or as an extension of Bradlow's Kobayashi-Hitchin correspondence to compact Gauduchon manifolds.

For the proof of Theorem \ref{main-theorem}, it would be natural to follow either Bradlow's or Li-Yau's proof methods. However, neither method can be directly applied, and difficulties arise. Specifically, Bradlow's heat flow method uses the Donaldson functional defined on K\"{a}hler manifolds and essentially relies on the K\"{a}hler property, making it difficult to apply to non-K\"{a}hler manifolds. For this reason, we use Li-Yau's continuity method to construct the Hermitian-Einstein metric. However, in their construction process, they use the simplicity of vector bundles, which follows from the stability of vector bundles. The simplicity of vector bundles does not follow from the $\tau$-stability of vector bundles, so we need to remove this assumption in the construction of the $\tau$-Hermitian-Einstein metric. In this paper, we prove it without using the simplicity.

In addition to the extensions of the Kobayashi-Hitchin correspondence mentioned above, the extension to non-compact manifolds has also been studied. In particular, Wang-Zhang \cite{MR4309984} proved the existence of a $\tau$-Hermitian-Einstein metric in a $\tau$-stable holomorphic pair over a non-compact manifold satisfying suitable assumptions. It should be noted that Theorem \ref{main-theorem} is not included in their result.
\bigskip

The structure of this paper is as follows. In Chapter 2, we recall some definitions about holomorphic pairs. The proof of the main theorem (Theorem \ref{main-theorem}) is presented in Chapter 3. In particular, we develop arguments that do not rely on the simplicity of vector bundles when proving the solvability of the perturbed equations (Section \ref{sol-perturb}) and the existence of limits for their solutions (Section \ref{sol-vol}). In Chapter 4, we discuss the Higgs bundle version of the main theorem.

\section{Preliminaries}
On an $n$-dimensional compact Hermitian manifold $(X,\omega_g)$, we always have a metric satisfying $\partial\bar{\partial}\omega_g^{n-1}=0$, which is called the Gauduchon metric. Let $E=(E,\bar{\partial}_E)$ be a rank $r$ holomorphic vector bundle over an $n$-dimensional compact Gauduchon manifold $(X,\omega_g)$. A holomorphic pair consists of $E$ and $\phi$ where $\phi\neq 0$ is a holomorphic section of $E$. We denote the Chern connection with respect to a metric $h$ in $E$ by $d_h=\partial_h+\bar{\partial}_E$, and its corresponding curvature by $F_h$.

Given a metric $h_0$ in $E$, any metric $h$ in $E$ can be expressed as $h(s,t)=h_0(fs,t)$ using a positive Hermitian transformation $f$ with respect to $h_0$. We denote this as $h=h_0\cdot f$. Furthermore, we denote the sets of Hermitian transformations and positive Hermitian transformations as $\mathrm{Herm}=\mathrm{Herm}(E,h)$ and $\mathrm{Herm}^+=\mathrm{Herm}^+(E,h)$, respectively.

\begin{definition}[Bradlow \cite{MR1085139}]
  For a holomorphic pair $(E,\phi)$ over a Gauduchon manifold $(X,\omega_g)$, the following equation for a metric $h$ in $E$ is called the vortex equation:
  \begin{gather}\label{vortex-eq}
    i\Lambda_gF_h+\frac{1}{2}\phi\otimes\phi^*_h-\frac{\tau}{2}\mathrm{id}_E=0,~\tau\in \mathbb{R}
  \end{gather}
  where $\phi\otimes\phi^*_h$ is a section of $\mathrm{End}E$, defined by $\phi\otimes\phi^*_h(s)=h(s,\phi)\phi$ for a section $s$ of $E$. A solution $h$ to this equation is called the {\it$\tau$-Hermitian-Einstein metric}.
\end{definition}

The determinant bundle of a reflexive sheaf $\mathcal{F}$ of rank $r$ is defined as $\det\mathcal{F}:=(\wedge^r\mathcal{F})^{**}$. Note that reflexive sheaves of rank $1$ are locally free sheaves (see Okonek-Schneider-Spindler \cite{MR2815674}, Kobayashi \cite{MR909698}). Given a reflexive sheaf $\mathcal{F}$ of rank $r$, we define its degree as:
\begin{gather*}
	\deg\mathcal{F}:=\int_Xc_1(\deg\mathcal{F})\wedge\omega_g^{n-1}
\end{gather*}
The slope of a reflexive sheaf $\mathcal{F}$ is defined as $\mu(\mathcal{F}):=\frac{\deg\mathcal{F}}{\mathrm{rank}\mathcal{F}}$. For a holomorphic pair $(\mathcal{F},\phi)$, we define the following values:
\begin{gather*}
  \begin{aligned}
    &\mu_M:=\sup\left\{~\mu(\mathcal{G})~\left|~
    \begin{aligned}
      &\mathcal{G}\subset\mathcal{F}\text{ is a subsheaf of }0<\mathrm{rank}\mathcal{G}\le\mathrm{rank}\mathcal{F}\\
      &\text{ with quotient torsion-free}  
    \end{aligned}
    \right.~\right\}\\
    &\mu_m(\phi):=\inf\left\{~\mu(\mathcal{F}/\mathcal{G})~\left|~
    \begin{aligned}
      &\mathcal{G}\subset\mathcal{F}\text{ is a subsheaf of }0<\mathrm{rank}\mathcal{G}<\mathrm{rank}\mathcal{F}\\
      &\text{ with quotient torsion-free, and }\phi\in\mathcal{G}
    \end{aligned}
    \right.~\right\}
  \end{aligned}
\end{gather*}

\begin{definition}[Bradlow \cite{MR1085139}]
  A holomorphic pair $(\mathcal{F},\phi)$ is called {\it$\phi$-stable} if it satisfies
  \begin{gather*}
    \mu_M<\mu_m(\phi).
  \end{gather*}
  Moreover, it is called {\it$\tau$-stable} if it satisfies
  \begin{gather*}
    \mu_M<\frac{\tau}{4\pi}\mathrm{Vol}_g(X)<\mu_m(\phi).
  \end{gather*}
\end{definition}

We can prove that the existence of a $\tau$-Hermitian-Einstein metric implies the $\tau$-stability of $(E,\phi)$ in a similar way as Bradlow \cite[Theorem 2.1.6]{MR1085139}.

\begin{theorem}
  If a holomorphic pair $(E,\phi)$ over a compact Gauduchon manifold $(X,\omega_g)$ admits a $\tau$-Hermitian-Einstein metric, then one of the following holds:
  \begin{itemize}
    \item [(a)] $(E,\phi)$ is $\tau$-stable,
    \item [(b)] $E$ decomposes holomorphically as $E=E_\phi\oplus E^{\prime\prime}$, where $\phi\in E_\phi$ is $\tau$-stable, and $E^{\prime\prime}$ is polystable with $\mu(E^{\prime\prime})=\frac{\tau}{4\pi}\mathrm{Vol}_g(X)$.
  \end{itemize}
\end{theorem}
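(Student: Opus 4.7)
The plan is to adapt Bradlow's proof of \cite[Theorem 2.1.6]{MR1085139} from the K\"ahler to the Gauduchon setting, replacing the use of $d\omega_g=0$ by $\partial\bar\partial\omega_g^{n-1}=0$ exactly as Li--Yau \cite{MR915839} did in the standard Kobayashi--Hitchin correspondence. The starting point is the Uhlenbeck--Yau representation: to any saturated coherent subsheaf $\mathcal{G}\subset E$ with torsion-free quotient, attach a weakly holomorphic $L^2_1$ orthogonal projection $\pi\in\mathrm{End}(E)$ satisfying $\pi=\pi^2=\pi^{*_h}$ and $(\mathrm{id}_E-\pi)\bar\partial\pi=0$. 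The Gauduchon condition enables the Chern--Weil formula expressing $\deg(\mathcal{G})$ as $\int_X\mathrm{tr}(\pi\cdot i\Lambda_gF_h)\,\omega_g^n$ minus an $L^2$-norm of $\bar\partial\pi$, up to a positive normalization.

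Substituting the vortex equation into this formula and using $\mathrm{tr}(\pi\cdot\phi\otimes\phi^*_h)=|\pi\phi|^2_h$, one obtains a schematic identity of the form
\begin{gather*}
 \mu(\mathcal{G})=\frac{\tau}{4\pi}\mathrm{Vol}_g(X)-\alpha\int_X|\pi\phi|^2_h\,\omega_g^n-\beta\int_X|\bar\partial\pi|^2_h\,\omega_g^n
\end{gather*}
with $\alpha,\beta>0$; nonnegativity of the correction terms gives $\mu_M\le\frac{\tau}{4\pi}\mathrm{Vol}_g(X)$. Applying the identity with $\pi=\mathrm{id}_E$ to compute $\deg(E)$, subtracting, and using $\deg(E/\mathcal{G})=\deg(E)-\deg(\mathcal{G})$ for torsion-free quotients, one obtains a companion formula for $\mu(E/\mathcal{G})$ in which $|\pi\phi|^2_h$ is replaced by $|(\mathrm{id}_E-\pi)\phi|^2_h$ and the sign in front of $|\bar\partial\pi|^2_h$ flips. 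When $\phi\in\mathcal{G}$, the $\phi$-correction vanishes, so $\mu(E/\mathcal{G})\ge\frac{\tau}{4\pi}\mathrm{Vol}_g(X)$ and hence $\mu_m(\phi)\ge\frac{\tau}{4\pi}\mathrm{Vol}_g(X)$. If both inequalities are strict, conclusion (a) holds.

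Otherwise some $\mathcal{G}$ attains equality. Equality in $\mu(\mathcal{G})=\frac{\tau}{4\pi}\mathrm{Vol}_g(X)$ forces $|\pi\phi|^2_h\equiv 0$ and $|\bar\partial\pi|^2_h\equiv 0$, and Uhlenbeck--Yau regularity then upgrades $\pi$ to a smooth holomorphic orthogonal projection; this produces a holomorphic splitting $E=\mathcal{G}\oplus\mathcal{G}^\perp$ with $\phi\in\mathcal{G}^\perp$. Setting $E_\phi:=\mathcal{G}^\perp$ and $E'':=\mathcal{G}$, the vortex equation decouples into a Hermitian--Einstein equation on $E''$ and a $\tau$-vortex equation on $(E_\phi,\phi)$, so Li--Yau's correspondence gives polystability of $E''$ with the claimed slope, and iterating the whole argument on $(E_\phi,\phi)$ with strictly decreasing rank terminates at a $\tau$-stable pair, yielding conclusion (b). The case $\mu_m(\phi)=\frac{\tau}{4\pi}\mathrm{Vol}_g(X)$ is handled symmetrically by working with the quotient projection. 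The main technical point is precisely this equality analysis: promoting the weakly holomorphic $L^2_1$ projection to a smooth holomorphic orthogonal splitting, and ensuring that the induction on $(E_\phi,\phi)$ terminates at a strictly $\tau$-stable pair rather than stabilizing at something merely $\tau$-semistable. Both steps appear in Bradlow's K\"ahler argument and carry over essentially verbatim once the Gauduchon Chern--Weil identity has been set up.
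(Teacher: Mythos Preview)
Your proposal is correct and follows precisely the approach the paper indicates: the paper does not give an independent proof of this theorem but simply states that it is proved ``in a similar way as Bradlow \cite[Theorem 2.1.6]{MR1085139},'' and your sketch is exactly the Bradlow argument with the K\"ahler Chern--Weil identity replaced by its Gauduchon analogue \`a la Li--Yau. The key ingredients you identify---the weakly holomorphic projection, the degree formula with nonnegative correction terms, and the equality analysis yielding a smooth holomorphic splitting on which one inducts---are the ones Bradlow uses, and they carry over to the Gauduchon setting without essential change.
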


\section{The existence of $\tau$-Hermitian-Einstein Metrics in $\tau$-Stable Bundles}

\subsection{The Solvability of the Perturbed Equation}\label{sol-perturb}
We fix a Hermitian metric $h_0$ in $E$, and consider the perturbed equation for the vortex equation:
\begin{gather}\label{perturbed-equation}
  L_\varepsilon(f):=i\Lambda_g F_{h_0\cdot f}+\frac{1}{2}\phi\otimes\phi^*_{h_0\cdot f}-\frac{\tau}{2}\mathrm{id}_E+\varepsilon\log f=0,~0<\varepsilon\le 1.
\end{gather}
Let $J\subset(0,1]$ be a smooth parameter interval where the perturbed equation $L_\varepsilon(f)=0$ has a smooth solution:
\begin{gather*}
  J:=\{~\varepsilon\in(0,1]~|~^\exists f:(\varepsilon,1]\overset{C^\infty}{\rightarrow}\mathrm{Herm}^+\text{ s.t. }L_{\varepsilon^\prime}(f(\varepsilon^\prime))=0\text{ for all }\varepsilon^\prime\in(\varepsilon,1]~\}
\end{gather*}
In this section, we will show the solvability of the perturbed equation $L_\varepsilon(f)=0$, following the proof of L\"{u}bke-Teleman \cite{MR1370660}, which detailed Uhlenbeck-Yau \cite{MR861491}, Li-Yau \cite{MR915839}.

\bigskip

First, we will choose an initial metric $h_0$ in $E$ such that the equation $L_1(f)=0$ has a solution $f=f_1$, then prove that the interval $J$ is open.

\begin{lemma}
  There exists a metric $h_0$ in $E$ such that the equation $L_1(f)=0$ has a solution $f=f_1$.
\end{lemma}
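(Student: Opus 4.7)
The plan is to exploit the freedom to choose \emph{both} the reference metric $h_0$ and the endomorphism $f_1$. This is precisely the reason the perturbation term $\log f$ was inserted into \eqref{perturbed-equation}: at $\varepsilon=1$, one can absorb the vortex-equation defect of any auxiliary metric into $\log f_1$, at the cost of redefining the base metric. So the whole step should be purely algebraic, with no PDE to solve.

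Concretely, I would fix an arbitrary smooth Hermitian metric $k$ on $E$ and set
\[
B := i\Lambda_g F_k+\tfrac{1}{2}\phi\otimes\phi^*_k-\tfrac{\tau}{2}\mathrm{id}_E,
\]
which is a smooth endomorphism of $E$. It is self-adjoint with respect to $k$: the curvature contribution $i\Lambda_g F_k$ is $k$-self-adjoint by standard Chern-connection theory, $\phi\otimes\phi^*_k$ is manifestly $k$-self-adjoint from the identity $k(\phi\otimes\phi^*_k(s),t)=k(s,\phi)k(\phi,t)$, and the identity term is trivially self-adjoint. I would then define
\[
h_0 := k\cdot\exp(B),\qquad f_1 := \exp(-B).
\]

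The verification is then a direct calculation. Since $B$ is $k$-self-adjoint, $\exp(B)$ is positive and $k$-self-adjoint, so $h_0$ is a legitimate Hermitian metric. Because $\exp(B)$ and $\exp(-B)$ commute, a short check using the defining identity $h_0(s,t)=k(\exp(B)s,t)$ shows that $f_1$ is also positive and $h_0$-self-adjoint, so $h_0\cdot f_1$ is well-defined as a metric. By construction $h_0\cdot f_1=k$, hence $F_{h_0\cdot f_1}=F_k$ and $\phi^*_{h_0\cdot f_1}=\phi^*_k$; substituting into $L_1$ gives
\[
L_1(f_1)=B+\log f_1=B-B=0.
\]

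I do not expect a real obstacle here — the lemma is algebraic. The genuine analytic work begins with the next step, where openness of $J$ must be proved by an implicit function theorem argument on the linearization of $L_\varepsilon$, and then closedness of $J$, which is where the $\tau$-stability assumption and the avoidance of the simplicity hypothesis (announced in the introduction) will have to enter.
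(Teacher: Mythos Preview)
Your proof is correct and is essentially identical to the paper's own argument: the paper takes an arbitrary metric $h$, sets $K^0_h:=i\Lambda_g F_h+\tfrac{1}{2}\phi\otimes\phi^*_h-\tfrac{\tau}{2}\mathrm{id}_E$, and then chooses $h_0:=h\cdot e^{K^0_h}$ and $f_1:=e^{-K^0_h}$, which is your construction with $k=h$ and $B=K^0_h$. Your additional verification that $f_1$ is $h_0$-self-adjoint (rather than merely $k$-self-adjoint) is a detail the paper leaves implicit.
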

\begin{proof}
  Take an arbitrary metric $h$ in $E$, and set $K^0_h:=i\Lambda_g F_h+\frac{1}{2}\phi\otimes\phi^*_h-\frac{\tau}{2}\mathrm{id}_E$. Then, setting $h_0:=h\cdot e^{K_h^0}$, we find that $f=e^{-K_h^0}\in\mathrm{Herm}^+(h_0)$ solves the equation $L_1(f)=0$.
\end{proof}

We define $\hat{L}(\varepsilon,f):=f\circ L_\varepsilon(f)$ and denote its derivative in the $f$-direction by $d_2\hat{L}$:
\begin{gather*}
  d_2\hat{L}(\varepsilon,f):L^p_k\mathrm{Herm}\rightarrow L^p_{k-2}\mathrm{Herm}
\end{gather*}
where $L^p_k$ is the Sobolev space with index $k$ in $L^p$-space. For a tangent vector $\varphi$, a straightforward calculation yields
\begin{gather*}
  \begin{aligned}
    d_2\hat{L}(\varepsilon,f)(\varphi)=\varphi\circ L_\varepsilon(f)&-f\circ i\Lambda_g\bar{\partial}(f^{-1}\circ\varphi\circ f^{-1}\circ\partial_0 f-f^{-1}\circ\partial_0\varphi)\\
    &+\frac{1}{2}f\circ\phi\otimes\phi_{h_0}^*\circ\varphi+\varepsilon\varphi
  \end{aligned}
\end{gather*}
where $\partial_0=\partial_{h_0}$.

\begin{proposition}\label{alpha-ineq}
  Let $f\in L^p_k\mathrm{Herm}^+$ and $\varphi\in L^p_k\mathrm{Herm}$ satisfy
  \begin{gather*}
    \hat{L}(\varepsilon,f)=0,~d_2\hat{L}(\varepsilon,f)(\varphi)+\alpha f\circ\log f=0~(\alpha\in\mathbb{R}).
  \end{gather*}
  Then for $\eta:=f^{-1/2}\circ\varphi\circ f^{-1/2}$, we have
  \begin{gather*}
    P(|\eta|^2)+2\varepsilon|\eta|^2+|d^f\eta|^2\le -2\alpha h_0(\log f,\eta)
  \end{gather*}
  where $P:=i\Lambda_g\bar{\partial}\partial$, $|\cdot|:=\sqrt{h_0(\cdot,\cdot)}$, and $d^f\eta$ is defined in the proof.
\end{proposition}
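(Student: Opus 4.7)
The hypothesis $\hat L(\varepsilon,f)=0$ and invertibility of $f$ force $L_\varepsilon(f)=0$, so the term $\varphi\circ L_\varepsilon(f)$ in the formula for $d_2\hat{L}(\varepsilon,f)(\varphi)$ vanishes and the second hypothesis becomes
$$-f\circ i\Lambda_g\bar{\partial}\bigl(f^{-1}\!\circ\varphi\circ f^{-1}\!\circ\partial_0 f-f^{-1}\!\circ\partial_0\varphi\bigr)+\tfrac{1}{2}f\circ\phi\otimes\phi^*_{h_0}\circ\varphi+\varepsilon\varphi+\alpha f\circ\log f=0.$$
The plan is to regard this as a Chern-Laplacian equation for $\eta=f^{-1/2}\circ\varphi\circ f^{-1/2}$, pair it with $\eta$ in the $h_0$-inner product, and then apply a Weitzenb\"ock identity for the Gauduchon operator $P=i\Lambda_g\bar{\partial}\partial$.

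The first key step is to identify $d^{f}$. The Chern connection of the perturbed metric $h=h_0\cdot f$ on $E$ differs from the Chern connection of $h_0$ by $f^{-1}\circ\partial_0 f$, and this induces a Chern connection on $\mathrm{End}\,E$. Conjugation by $f^{1/2}$ gives an isometry between $(\mathrm{End}\,E,h)$ and $(\mathrm{End}\,E,h_0)$ that carries $\varphi$ to $\eta$. I would define $d^{f}\eta$ to be the image under this isometry of the Chern connection of $h$ applied to $\varphi$; its $(0,1)$-part is $\bar\partial\eta$ and its $(1,0)$-part $\partial^{f}\eta$ is precisely the expression that, after a short algebraic rearrangement, identifies the awkward combination $f^{-1}\varphi f^{-1}\partial_0 f-f^{-1}\partial_0\varphi$ with (a scalar multiple of) $f^{-1/2}\circ\partial^{f}\eta\circ f^{-1/2}$. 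Once this is done the linearized equation above becomes, after conjugating by $f^{\pm 1/2}$, a clean PDE of the form $i\Lambda_g\bar\partial\partial^{f}\eta+(\text{positive potential})\eta+\varepsilon\eta=-\alpha\log f$, modulo lower-order bookkeeping.

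Next I would apply the pointwise Weitzenb\"ock-type identity
$$P(|\eta|^{2})=2\operatorname{Re}\,h_0\bigl(i\Lambda_g\bar\partial\partial^{f}\eta,\eta\bigr)-|d^{f}\eta|^{2},$$
which holds for $h_0$-Hermitian endomorphisms on any Hermitian manifold after integration against a Gauduchon volume form; the Gauduchon condition $\partial\bar\partial\omega_g^{n-1}=0$ is precisely what kills the torsion correction that would otherwise appear. Substituting the rewritten linearized equation for $i\Lambda_g\bar\partial\partial^{f}\eta$ produces on the right-hand side the terms $-2\varepsilon|\eta|^{2}$, a non-positive contribution coming from $\tfrac12\phi\otimes\phi^{*}_{h_0}$ (non-negative on $h_0$-Hermitian endomorphisms because $\phi\otimes\phi^{*}_{h_0}$ is a rank-one positive operator and $f$ is positive), and the term $-2\alpha\,h_0(\log f,\eta)$ from the $\alpha f\circ\log f$ contribution (using that $\log f$ commutes with $f^{\pm 1/2}$, so conjugation by $f^{-1/2}$ leaves it unchanged up to the factor absorbed into the trace). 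Discarding the non-positive potential term gives the stated inequality.

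The main obstacle is entirely bookkeeping: one must verify that the derivative combination produced by the linearization is exactly $\partial^{f}\eta$ conjugated by $f^{\pm 1/2}$, and that the commutators of $\bar\partial$ with $f^{1/2}$ reassemble into $d^{f}\eta$ rather than leaving residual terms of indeterminate sign. The positivity of $\phi\otimes\phi^{*}_{h_0}$ handles the extra potential term not present in the classical Uhlenbeck--Yau / L\"ubke--Teleman setting, so once the connection $d^{f}$ is set up correctly the argument reduces to the standard Weitzenb\"ock estimate on a Gauduchon manifold.
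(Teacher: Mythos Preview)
Your approach is essentially the paper's: conjugate by $f^{\pm 1/2}$ to turn the linearized equation into a clean second-order equation for $\eta$, pair with $\eta$, drop the non-negative potential coming from $\phi\otimes\phi^*_{h_0}$, and invoke a Weitzenb\"ock identity for $P(|\eta|^2)$. Two points, however, need correction.

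First, the connection $d^f$ is \emph{not} obtained by pushing the Chern connection of $h$ through a metric isometry, and in particular its $(0,1)$-part is not the ordinary $\bar\partial$. The paper defines both pieces by conjugation, $\bar\partial^f:=\mathrm{Ad}f^{1/2}\circ\bar\partial\circ\mathrm{Ad}f^{-1/2}$ and $\partial_0^f:=\mathrm{Ad}f^{-1/2}\circ\partial_0\circ\mathrm{Ad}f^{1/2}$, and it is for this specific $h_0$-unitary connection that the identity $P(|\eta|^2)=h_0(P^f\eta,\eta)+h_0(\eta,(P^f\eta)^{*_{h_0}})-|d^f\eta|^2$ with $P^f=i\Lambda_g\bar\partial^f\partial_0^f$ holds. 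If you take the $(0,1)$-part to be $\bar\partial$ you will pick up residual commutator terms of indeterminate sign --- exactly the ``bookkeeping obstacle'' you flagged, which the correct definition of $d^f$ avoids entirely.

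Second, the Weitzenb\"ock identity here is \emph{pointwise}; no integration is performed and the Gauduchon condition $\partial\bar\partial\omega_g^{n-1}=0$ plays no role in this proposition. The Gauduchon hypothesis enters elsewhere in the paper (defining degree, integrating $P$ of a function), but the inequality $P(|\eta|^2)+2\varepsilon|\eta|^2+|d^f\eta|^2\le -2\alpha h_0(\log f,\eta)$ is valid pointwise on any Hermitian manifold.
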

\begin{proof}
  Consider a gauge transformation $\mathrm{Ad}f^{1/2}(\psi):=f^{1/2}\circ\psi\circ f^{-1/2}$ for any smooth section $\psi$ of $\mathrm{End}E$. Define a new $h_0$-connection $d^f$ in $\mathrm{End}E$ by
  \begin{gather*}
    d^f:=\bar{\partial}^f+\partial_0^f,~\bar{\partial}^f:=\mathrm{Ad}f^{\frac{1}{2}}\circ\bar{\partial}\circ\mathrm{Ad}f^{-\frac{1}{2}},~\partial^f_0:=\mathrm{Ad}f^{-\frac{1}{2}}\circ\partial_0\circ\mathrm{Ad}f^{\frac{1}{2}}
  \end{gather*}
  and set $P^f:=i\Lambda_g\bar{\partial}^f\partial_0^f$. From the assumption $\hat{L}(\varepsilon,f)=0$ and simple calculation,
  \begin{gather*}
    \begin{aligned}
      d_2\hat{L}(\varepsilon,f)(\varphi)&=f\circ\left(d_2(i\Lambda_g\bar{\partial}(f^{-1}\circ\partial_0f))(\varphi)\right)+\frac{1}{2}f\circ\phi\otimes\phi_{h_0}^*\circ\varphi+\varepsilon\varphi\\
      &=f^{\frac{1}{2}}\circ P^f(\eta)\circ f^{\frac{1}{2}}+\frac{1}{2}f\circ\phi\otimes\phi_{h_0}^*\circ\varphi+\varepsilon\varphi.
    \end{aligned}
  \end{gather*}
  Further, from the assumption $d_2\hat{L}(\varepsilon,f)(\varphi)+\alpha f\circ\log f=0$,
  \begin{gather*}
    P^f(\eta)+\frac{1}{2}f^{\frac{1}{2}}\circ\phi\otimes\phi_{h_0}^*\circ f^{\frac{1}{2}}\circ\eta+\varepsilon\eta=-\alpha\log f.
  \end{gather*}
  Let $\Phi:=\frac{1}{2}f^{\frac{1}{2}}\circ\phi\otimes\phi_{h_0}^*\circ f^{\frac{1}{2}}$ be a semi-positive Hermitian transformation. Then
  \begin{gather*}
    \begin{aligned}
      &h_0(P^f(\eta),\eta)+h_0(\Phi\circ\eta,\eta)+\varepsilon h_0(\eta,\eta)=-\alpha h_0(\log f,\eta),\\
      &h_0(\eta,P^f(\eta)^*_{h_0})+h_0(\eta,\eta\circ\Phi)+\varepsilon h_0(\eta,\eta)=-\alpha h_0(\eta,\log f).
    \end{aligned}
  \end{gather*}
  Adding both sides, and using $h_0(\Phi\circ\eta,\eta)=h_0(\eta,\eta\circ\Phi)=|\Phi^{1/2}\circ\eta|^2\ge 0$, we obtain
  \begin{gather}\label{ineq-alpha}
    h_0(P^f(\eta),\eta)+h_0(\eta,P^f(\eta)^*_{h_0})+2\varepsilon|\eta|^2\le -2\alpha h_0(\log f,\eta).
  \end{gather}
  Finally, substituting the following equation into the left side yields the desired inequality:
  \begin{gather*}
    P(|\eta|^2)=h_0(P^f(\eta),\eta)+h_0(\eta,P^f(\eta)^*_{h_0})-|d^f\eta|^2.
  \end{gather*}
\end{proof}

\begin{theorem}\label{open-condition}
  $J\subset(0, 1]$ is an open subset, and  there exists $\varepsilon_0\ge 0$ such that $J=(\varepsilon_0,1]$.
\end{theorem}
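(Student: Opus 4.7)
The plan is to reduce openness of $J$ to the implicit function theorem, with the necessary invertibility of the linearization coming from Proposition \ref{alpha-ineq} specialized to $\alpha = 0$; the structural claim $J = (\varepsilon_0, 1]$ will then follow almost formally from openness and the definition.

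For invertibility of $d_2\hat{L}(\varepsilon, f) : L^p_k\mathrm{Herm} \to L^p_{k-2}\mathrm{Herm}$ at a smooth solution $f$, take $\varphi \in \ker d_2\hat{L}(\varepsilon, f)$ and set $\eta := f^{-1/2}\circ\varphi\circ f^{-1/2}$. Proposition \ref{alpha-ineq} with $\alpha = 0$ yields
\begin{gather*}
  P(|\eta|^2) + 2\varepsilon|\eta|^2 + |d^f\eta|^2 \le 0.
\end{gather*}
Integrating against $\omega_g^n$, two applications of Stokes' theorem combined with the Gauduchon condition $\partial\bar{\partial}\omega_g^{n-1} = 0$ give $\int_X P(u)\,\omega_g^n = 0$ for every smooth function $u$, killing the $P(|\eta|^2)$ term. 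What remains is $2\varepsilon\int_X |\eta|^2\omega_g^n + \int_X |d^f\eta|^2\omega_g^n \le 0$, which forces $\varphi = 0$ since $\varepsilon > 0$. Because $d_2\hat{L}(\varepsilon, f)$ is a self-adjoint second-order elliptic operator of Fredholm index zero, trivial kernel upgrades to bijectivity.

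With invertibility in hand, the implicit function theorem applied to $\hat{L}$ at any $(\varepsilon^\prime, f(\varepsilon^\prime))$ with $\varepsilon^\prime \in (\varepsilon, 1]$ produces a smooth family of solutions on a neighborhood of $\varepsilon^\prime$ in $(0, 1]$. Choosing $\varepsilon^\prime$ close enough to $\varepsilon$ from above makes this neighborhood reach some $\tilde\varepsilon \in (0, \varepsilon)$; uniqueness (again from invertibility of the linearization) matches the new branch with the original curve on their overlap, and elliptic regularity promotes the resulting $L^p_k$-curve to $C^\infty$. The combined curve is smooth on $(\tilde\varepsilon, 1]$, so $\tilde\varepsilon \in J$, proving that $\varepsilon$ lies in the interior of $J$.

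For the structural claim, the definition of $J$ trivially gives $\varepsilon \in J \Rightarrow (\varepsilon, 1] \subset J$, so $J$ is an upward segment in $(0, 1]$. Setting $\varepsilon_0 := \inf J \in [0, 1)$, openness prevents $\varepsilon_0$ itself from lying in $J$, hence $J = (\varepsilon_0, 1]$. The only nonstandard ingredient is the Gauduchon/Stokes computation that eliminates $P(|\eta|^2)$ in Step~1; the remaining arguments are the usual continuity-method machinery, and the main point to be careful about is that the implicit-function-theorem neighborhood at $\varepsilon^\prime$ extends far enough below $\varepsilon$, which follows because $f(\varepsilon^\prime)$ depends smoothly on $\varepsilon^\prime$ along the existing curve and hence the Fredholm constants stay uniformly bounded.
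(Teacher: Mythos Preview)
Your proof is correct and tracks the paper's argument closely: both obtain injectivity of $d_2\hat L(\varepsilon,f)$ from Proposition~\ref{alpha-ineq} with $\alpha=0$, deduce invertibility from the index-zero property, and then invoke the implicit function theorem together with elliptic regularity. The one substantive variation is how you pass from the pointwise inequality $P(|\eta|^2)+2\varepsilon|\eta|^2+|d^f\eta|^2\le 0$ to $\eta=0$: you integrate over $X$ and use the Gauduchon condition $\partial\bar\partial\omega_g^{n-1}=0$ to make $\int_X P(|\eta|^2)\,\omega_g^n$ vanish, whereas the paper simply applies the maximum principle, which works on any compact Hermitian manifold and does not consume the Gauduchon hypothesis at this stage. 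Both routes are valid; the paper's is marginally more economical in its hypotheses. One small correction: $d_2\hat L(\varepsilon,f)$ is not self-adjoint for the $h_0$-induced $L^2$ inner product---the paper only claims Fredholm index zero, which is what you actually use and which follows from the ellipticity of the principal part.
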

\begin{proof}
  For $(\varepsilon,f)\in (0,1]\times L^2_k\mathrm{Herm}^+$ satisfying $\hat{L}(\varepsilon,f)=0$, we consider the operator $d_2\hat{L}(\varepsilon,f):L^2_k\mathrm{Herm}{\rightarrow}L^2_{k-2}\mathrm{Herm}$. Since this operator's index is zero, injective and  isomorphism are equivalent. By Proposition \ref{alpha-ineq} ($\alpha=0$) and the Maximal Principle, this operator is injective. Using the Implicit Function Theorem for Banach spaces and the Elliptic Regularity Theorem, we get that the interval $J$ is an open subset in $(0,1]$. From the definition of $J$ and $1\in J$, there exists $\varepsilon_0\ge 0$ such that $J=(\varepsilon_0,1]$.
\end{proof}

\bigskip

In the following arguments, the next theorem is used for norm estimates.

\begin{theorem}[Gilbarg-Trudinger {\cite[Theorem 9.20]{MR1814364}}]\label{max-ineq}
  Let $P:=i\Lambda_g\bar{\partial}\partial$ be an elliptic operator, $f\in C^2(X,\mathbb{R}_{\ge 0})$ be a function, $\lambda\in\mathbb{R}_{\ge 0}$, and $\mu\in\mathbb{R}$. If
  \begin{gather*}
    P(f)\le \lambda f+\mu,
  \end{gather*}
  then there exists a positive constant $C=C(g,\lambda)$ such that
  \begin{gather*}
    \sup_X f\le C(\|f\|_{L^1}+\mu).
  \end{gather*}
\end{theorem}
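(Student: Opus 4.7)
The plan is to establish this $L^\infty$-bound by Moser iteration, the standard technique for subsolutions of second-order elliptic PDEs. Two preparatory facts are essential. First, $P=i\Lambda_g\bar{\partial}\partial$ is strictly elliptic with principal symbol proportional to $|\xi|_g^2$. Second, because $\omega_g$ is Gauduchon ($\partial\bar{\partial}\omega_g^{n-1}=0$), $P$ admits an integration-by-parts identity against the volume form $\omega_g^n$: for $u\in C^2(X,\mathbb{R}_{\ge 0})$ and $p\ge 2$,
$$\int_X u^{p-1}\,P(u)\,\omega_g^n \ \ge\ c_p\int_X|du^{p/2}|_g^2\,\omega_g^n,$$
with $c_p>0$ depending only on $p$. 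This is obtained from Stokes' theorem, the Gauduchon condition eliminating exactly the terms that would otherwise obstruct the asymptotic self-adjointness of $P$.

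Given these ingredients, I would multiply the hypothesis $P(f)\le \lambda f+\mu$ by $f^{p-1}$ and integrate, yielding the energy inequality
$$c_p\,\|df^{p/2}\|_{L^2}^2\ \le\ \lambda\,\|f\|_{L^p}^p+\mu\,\|f\|_{L^{p-1}}^{p-1}.$$
Plugging this into the Sobolev embedding $W^{1,2}(X)\hookrightarrow L^{2\chi}(X)$ for $\chi=n/(n-1)$, and using Young's inequality to absorb the $\mu$-term, produces the reverse-H\"older-type estimate
$$\|f\|_{L^{p\chi}}\ \le\ \bigl(C(g,\lambda,p)\bigr)^{1/p}\bigl(\|f\|_{L^p}+\mu+1\bigr).$$

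I would then iterate with $p_k=2\chi^k$, each step improving integrability by a factor of $\chi$. Taking $p_k$-th roots and composing, a geometric-series argument shows that the infinite product $\prod_k C(g,\lambda,p_k)^{1/p_k}$ is finite and depends only on $g$ and $\lambda$. Passing to the limit $k\to\infty$ converts the left side into $\sup_X f$, giving
$$\sup_X f\ \le\ C(g,\lambda)\bigl(\|f\|_{L^2}+\mu\bigr).$$
The final reduction from $L^2$ to $L^1$ on the right comes from the interpolation $\|f\|_{L^2}^2\le\|f\|_{L^\infty}\|f\|_{L^1}$ combined once more with Young's inequality to absorb the resulting $\sup$-term back into the left side.

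The main obstacle I expect is not any individual estimate but the constant bookkeeping through the iteration: one must confirm that the Sobolev constant, the ellipticity constant of $P$, and the combinatorial $p$-dependent factors coming from the integration-by-parts on the Gauduchon manifold combine so that $\sum_k p_k^{-1}\log C(g,\lambda,p_k)<\infty$. This is delicate but entirely standard and is what Gilbarg-Trudinger carry out in their proof of the cited theorem.
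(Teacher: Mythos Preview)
The paper does not give its own proof of this statement; it is quoted as a black box from Gilbarg--Trudinger and then applied repeatedly in Section~3. So there is nothing in the paper to compare your argument against.

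Your Moser-iteration plan is correct and would furnish a self-contained proof. The key computation you highlight is right: with $u^{p-1}\partial u=\tfrac{1}{p}\partial(u^p)$, the torsion term arising in the integration by parts is
\[
\frac{i}{p}\int_X \partial(u^p)\wedge\bar{\partial}\omega_g^{n-1}
=\frac{i}{p}\int_X d\bigl(u^p\,\bar{\partial}\omega_g^{n-1}\bigr)
-\frac{i}{p}\int_X u^p\,\partial\bar{\partial}\omega_g^{n-1}=0,
\]
using Stokes and the Gauduchon condition. After that, the Sobolev step, the iteration with $p_k=2\chi^k$, and the $L^2$-to-$L^1$ reduction via $\|f\|_{L^2}^2\le\|f\|_{L^\infty}\|f\|_{L^1}$ are all standard and go through as you describe.

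Two small corrections. First, your closing sentence that this ``is what Gilbarg--Trudinger carry out'' is not accurate: their Theorem~9.20 sits in the non-divergence chapter and is proved via the Aleksandrov--Bakelman--Pucci maximum principle together with a Krylov--Safonov-type measure-theoretic iteration, not by energy estimates. That route needs only uniform ellipticity and bounded coefficients, which is why the paper may invoke it without any appeal to the Gauduchon hypothesis. Second, and relatedly, the Gauduchon condition is not essential to the \emph{result}---only to your particular derivation of the clean energy inequality. Without it the torsion term survives as a first-order contribution that can still be absorbed by Young's inequality (or one simply falls back on the ABP route). So your approach is a natural and correct alternative in this setting, but it is more specialized than the cited theorem requires.
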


\begin{proposition}\label{triple-ineq}
  Let us $|\cdot|:=\sqrt{h_0(\cdot,\cdot)},~K^0_{h_0}:=i\Lambda_g F_{h_0}+\frac{1}{2}\phi\otimes\phi^*_{h_0}-\frac{\tau}{2}\mathrm{id}_E$. For the solution $f=f_\varepsilon$ of the perturbed equation $L_\varepsilon(f)=0$, the following inequalities hold:
  \begin{flalign*}
    &(1)~\frac{1}{2}P(|\log f_\varepsilon|^2)+\varepsilon |\log f_\varepsilon|^2\le |K^0_{h_0}||\log f_\varepsilon|,&\\
    &(2)~\sup_X|\log f_\varepsilon|\le \frac{1}{\varepsilon} \sup_X|K^0_{h_0}|,&\\
    &(3)~\sup_X|\log f_\varepsilon|\le C(\|\log f_\varepsilon\|_{L^2}+ \sup_X|K^0_{h_0}|).&
  \end{flalign*}
\end{proposition}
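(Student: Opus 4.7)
The plan is to treat (1) as the central Bochner-type estimate, from which (2) and (3) follow quickly by maximum-principle arguments. Set $h_\varepsilon := h_0 \cdot f_\varepsilon$ and $K_h := i\Lambda_g F_h + \tfrac{1}{2}\phi\otimes\phi^*_h - \tfrac{\tau}{2}\mathrm{id}_E$, so that $L_\varepsilon(f_\varepsilon)=0$ reads $K_{h_\varepsilon} = -\varepsilon \log f_\varepsilon$, equivalently $K_{h_\varepsilon} - K^0_{h_0} = -\varepsilon \log f_\varepsilon - K^0_{h_0}$.

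For (1), I would first invoke the standard pointwise Bochner-type estimate (a rerun of the computation in the proof of Proposition \ref{alpha-ineq}, cf.\ L\"ubke--Teleman \cite{MR1370660})
\begin{gather*}
\tfrac{1}{2}P(|\log f_\varepsilon|^2) \le h_0\bigl(i\Lambda_g(F_{h_\varepsilon}-F_{h_0}),\,\log f_\varepsilon\bigr),
\end{gather*}
in which a nonnegative gradient-type term has been dropped from the left. Next, I would rewrite $i\Lambda_g(F_{h_\varepsilon}-F_{h_0}) = (K_{h_\varepsilon}-K^0_{h_0}) - \tfrac{1}{2}(\phi\otimes\phi^*_{h_\varepsilon} - \phi\otimes\phi^*_{h_0})$ and use the identity $\phi\otimes\phi^*_{h_\varepsilon}-\phi\otimes\phi^*_{h_0} = (\phi\otimes\phi^*_{h_0})\circ(f_\varepsilon - \mathrm{id}_E)$. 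The key observation is that the pairing $h_0(\phi\otimes\phi^*_{h_\varepsilon}-\phi\otimes\phi^*_{h_0},\log f_\varepsilon) = \mathrm{tr}\bigl((\phi\otimes\phi^*_{h_0})\cdot(f_\varepsilon-\mathrm{id}_E)\log f_\varepsilon\bigr)$ is nonnegative: since $f_\varepsilon$ and $\log f_\varepsilon$ commute and $(x-1)\log x\ge 0$ on $(0,\infty)$, the operator $(f_\varepsilon-\mathrm{id}_E)\log f_\varepsilon$ is positive semidefinite, and its trace product with the positive semidefinite $\phi\otimes\phi^*_{h_0}$ is therefore nonnegative. Absorbing this as an extra nonnegative term on the left, substituting $K_{h_\varepsilon}-K^0_{h_0} = -\varepsilon \log f_\varepsilon - K^0_{h_0}$, and bounding $-h_0(K^0_{h_0},\log f_\varepsilon)\le |K^0_{h_0}|\,|\log f_\varepsilon|$ by Cauchy--Schwarz yields (1).

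For (2), I would apply the maximum principle directly: at a point $x_0\in X$ where $|\log f_\varepsilon|^2$ attains its supremum, $P(|\log f_\varepsilon|^2)(x_0)\ge 0$, so (1) reduces to $\varepsilon|\log f_\varepsilon(x_0)|^2\le |K^0_{h_0}(x_0)|\,|\log f_\varepsilon(x_0)|$; dividing (the case $\log f_\varepsilon\equiv 0$ being trivial) gives (2). For (3), Young's inequality $|K^0_{h_0}||\log f_\varepsilon|\le \tfrac{1}{2}|\log f_\varepsilon|^2 + \tfrac{1}{2}(\sup_X|K^0_{h_0}|)^2$ applied inside (1) gives $P(|\log f_\varepsilon|^2)\le |\log f_\varepsilon|^2 + (\sup_X|K^0_{h_0}|)^2$; Theorem \ref{max-ineq} with $\lambda=1$ and $\mu=(\sup_X|K^0_{h_0}|)^2$ then bounds $\sup_X|\log f_\varepsilon|^2$ by $C(\|\log f_\varepsilon\|_{L^2}^2 + (\sup_X|K^0_{h_0}|)^2)$, and taking square roots (via $\sqrt{a+b}\le\sqrt a + \sqrt b$) yields (3) with a new constant.

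The main obstacle is verifying that the Higgs-section contribution to the Bochner identity has the favorable sign, so that (1) cleanly acquires the extra $\varepsilon|\log f_\varepsilon|^2$ on the left; the functional-calculus argument $(x-1)\log x\ge 0$ is the essential new input, and it is precisely where the holomorphic-pair setting differs meaningfully from the pure Hermitian--Einstein case. Once this sign is settled, the curvature-gradient half of the identity is already essentially present in the proof of Proposition \ref{alpha-ineq}, and the passages from (1) to (2) and (3) are routine applications of the maximum principle and of the Moser-type Theorem \ref{max-ineq}.
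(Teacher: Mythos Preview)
Your proof is correct and follows essentially the same route as the paper: both invoke the Bochner-type inequality \eqref{calc-ineq} for the curvature-difference term, discard the nonnegative Higgs-section contribution (you via $(x-1)\log x\ge 0$ and trace positivity, the paper via monotonicity of $t\mapsto h_0(\phi\otimes\phi^*_{h_0}\circ e^{t\log f},\log f)$, which are equivalent observations), and then derive (2) and (3) from (1) by the maximum principle and Theorem~\ref{max-ineq} exactly as you describe.
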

\begin{proof}
  
  (1) We obtain the inequality through calculation by diagonalizing $f$ with a unitary frame (see \cite[Lemma 3.3.4]{MR1370660}).
  \begin{gather}\label{calc-ineq}
    \frac{1}{2}P(|\log f|^2)\le h_0(i\Lambda_g\bar{\partial}(f^{-1}\circ\partial_0f),\log f)
  \end{gather}
  Additionally, since the function $\xi(t):=h_0(\phi\otimes\phi^*_{h_0}\circ e^{t\log f},\log f)$ is monotonically increasing in $t$:
  \begin{gather}\label{monotone}
    h_0(\phi\otimes\phi_{h_0\cdot f}^*-\phi\otimes\phi_{h_0}^*,\log f)=\xi(1)-\xi(0)\ge 0.
  \end{gather}
  Using these relations and the following equation from the perturbed equation yields our claim:
  \begin{gather*}
    \begin{aligned}
      -h_0(K^0_{h_0},\log f)=&~h_0(i\Lambda_g\bar{\partial}(f^{-1}\circ\partial_0f),\log f)\\
      &\quad +\frac{1}{2}h_0(\phi\otimes\phi_{h_0\cdot f}^*-\phi\otimes\phi_{h_0}^*,\log f)+\varepsilon|\log f|^2.
    \end{aligned}
  \end{gather*}

  \noindent
  (2) Let $x_0\in X$ be a maximum point of $|\log f|^2$. If $P(|\log f|^2)(x_0)\ge 0$, then the claim follows together with (1). On the other hand, if we assume $P(|\log f|^2)(x_0)<0$, then by the Maximum Principle, $|\log f|^2$ becomes a constant function around $x_0$, which contradicts $P(|\log f|^2)(x_0)<0$.
  
  \medskip
  \noindent
  (3) From (1), we have $P(|\log f|^2)\le |\log f|^2+\sup_X|K_{h_0}^0|^2$, and using Theorem \ref{max-ineq},
  \begin{gather*}
    \begin{aligned}
      \sup_X|\log f|^2&\le C(\||\log f|^2\|_{L^1}+\sup_X|K_{h_0}^0|^2)\\
      &\le C(\|\log f\|_{L^2}+\sup_X|K_{h_0}^0|)^2.
    \end{aligned}
  \end{gather*}
\end{proof}

\begin{corollary}
  For the solution $f=f_\varepsilon$ of the perturbed equation $L_\varepsilon(f)=0$ and $\varepsilon_0>0$, the following holds:
  \begin{gather*}
     \sup_X|\log f_\varepsilon|\le C~\text{for all }\varepsilon\in(\varepsilon_0,1].
  \end{gather*}
\end{corollary}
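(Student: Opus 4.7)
The plan is to invoke part (2) of Proposition \ref{triple-ineq} directly, together with the strict positivity of $\varepsilon_0$. That proposition yields, for every $\varepsilon\in J$ and the corresponding solution $f_\varepsilon$, the pointwise bound
\begin{gather*}
  \sup_X|\log f_\varepsilon|\le \frac{1}{\varepsilon}\sup_X|K^0_{h_0}|.
\end{gather*}
Since the initial metric $h_0$ has been fixed once and for all (chosen so that $L_1(f)=0$ is solvable), the quantity $M:=\sup_X|K^0_{h_0}|$ is a finite constant independent of $\varepsilon$.

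The hypothesis $\varepsilon_0>0$ is precisely what converts this $\varepsilon$-dependent estimate into a genuinely uniform one. For every $\varepsilon\in(\varepsilon_0,1]$ we have $\varepsilon>\varepsilon_0$, so $\varepsilon^{-1}<\varepsilon_0^{-1}$, and combining with the bound above gives
\begin{gather*}
  \sup_X|\log f_\varepsilon|\le \frac{1}{\varepsilon_0}\sup_X|K^0_{h_0}|=:C,
\end{gather*}
which is the desired $C^0$-estimate uniform in $\varepsilon\in(\varepsilon_0,1]$.

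There is essentially no obstacle to overcome at this step, since all the analytic work has already been done in Proposition \ref{triple-ineq}(2) via the maximum principle applied to $|\log f_\varepsilon|^2$. The corollary is best viewed as a bookkeeping device: it records that, as long as $\varepsilon$ stays bounded away from $0$, the family $\{f_\varepsilon\}$ admits a uniform $C^0$ bound, which is the first ingredient needed in order to extract a limit as $\varepsilon$ decreases. The genuine difficulty is of course the borderline case $\varepsilon_0=0$, where the factor $\varepsilon^{-1}$ blows up and one must argue differently; that case is not addressed here and is the subject of the subsequent Section \ref{sol-vol}.
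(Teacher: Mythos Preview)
Your proof is correct and matches the paper's intended argument: the corollary is an immediate consequence of Proposition~\ref{triple-ineq}(2), using $\varepsilon^{-1}<\varepsilon_0^{-1}$ on $(\varepsilon_0,1]$. One small remark on your closing comment: the $\varepsilon$-independent bound is actually obtained in Section~\ref{sol-uniform} (via the $L^2$-estimate combined with part~(3) of the same proposition), not in Section~\ref{sol-vol}.
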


\begin{remark}\label{norm-controle}
  Furthermore, one has the following estimates:
  \begin{gather*}
    C_1\le \sup_X|f_\varepsilon|\le C_2~\text{for all }\varepsilon\in(\varepsilon_0,1].
  \end{gather*}
  Consequently, for any smooth section $\psi$ of $\mathrm{End}E$
  \begin{gather*}
    C_3|\psi|\le |f_\varepsilon\circ\psi|\le C_4|\psi|~\text{for all }\varepsilon\in(\varepsilon_0,1].
  \end{gather*}
  We will use these estimates frequently throughout the following arguments without comment.
  \qed
\end{remark}

The next lemma does not require the simplicity for vector bundles.

\begin{lemma}\label{simple-estimate}
  For the solution $f=f_\varepsilon$ of the perturbed equation $L_\varepsilon(f)=0$ and $\varepsilon_0>0$, the following holds:
  \begin{gather*}
     \sup_X\left|\frac{d}{d\varepsilon}f_\varepsilon\right|\le C\text{ for all }\varepsilon\in(\varepsilon_0,1].
  \end{gather*}
\end{lemma}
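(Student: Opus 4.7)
The plan is to differentiate the defining equation $\hat L(\varepsilon, f_\varepsilon) = 0$ in $\varepsilon$, observe that $\varphi := \frac{d}{d\varepsilon} f_\varepsilon$ then satisfies exactly the hypothesis of Proposition \ref{alpha-ineq} with $\alpha = 1$, and upgrade the resulting pointwise inequality to a uniform sup-norm bound via an $L^1$-estimate obtained from the Gauduchon condition followed by Theorem \ref{max-ineq}. Smoothness of the curve $\varepsilon \mapsto f_\varepsilon$ is already built into the construction of $J$ via the implicit function theorem in the proof of Theorem \ref{open-condition}, so the differentiation is legitimate.

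Since $\hat L$ depends on $\varepsilon$ only through the term $\varepsilon f \circ \log f$, direct differentiation in $\varepsilon$ of $\hat L(\varepsilon, f_\varepsilon) = 0$ yields
$$d_2 \hat L(\varepsilon, f_\varepsilon)(\varphi) + f_\varepsilon \circ \log f_\varepsilon = 0.$$
Setting $\eta := f_\varepsilon^{-1/2} \circ \varphi \circ f_\varepsilon^{-1/2}$, Proposition \ref{alpha-ineq} with $\alpha = 1$ gives
$$P(|\eta|^2) + 2\varepsilon |\eta|^2 + |d^f \eta|^2 \le -2 h_0(\log f_\varepsilon, \eta).$$
By Cauchy--Schwarz and the uniform bound $\sup_X |\log f_\varepsilon| \le C$ from the preceding Corollary, the right-hand side is at most $C|\eta|$. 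Applying Young's inequality $C|\eta| \le \varepsilon |\eta|^2 + C^2/(4\varepsilon)$ and using $\varepsilon \ge \varepsilon_0 > 0$ then produces the pointwise estimate
$$P(|\eta|^2) + \varepsilon |\eta|^2 \le \frac{C^2}{4\varepsilon_0}.$$

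To apply Theorem \ref{max-ineq} I still need an $L^1$-bound on $|\eta|^2$. I would obtain one by integrating the inequality $P(|\eta|^2) + 2\varepsilon |\eta|^2 \le C|\eta|$ against $dV_g$ and using the Gauduchon identity $\partial \bar\partial \omega_g^{n-1} = 0$, which forces $\int_X P(u)\, dV_g = 0$ for every smooth function $u$ (Stokes applied twice). A Cauchy--Schwarz bound on $\int_X |\eta|$ then yields $\|\eta\|_{L^2}^2 \le C'/\varepsilon_0^2$. Feeding this into Theorem \ref{max-ineq} with $\lambda = 0$ and $\mu = C^2/(4\varepsilon_0)$ gives $\sup_X |\eta|^2 \le C''$, and converting back via $\varphi = f_\varepsilon^{1/2} \circ \eta \circ f_\varepsilon^{1/2}$ together with the uniform bound on $f_\varepsilon$ from Remark \ref{norm-controle} produces the desired estimate $\sup_X |\varphi| \le C$.

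The conceptually important point — and the reason the lemma is flagged as not requiring simplicity — is the observation that differentiating the one-parameter family of equations places $\varphi$ directly into the framework of Proposition \ref{alpha-ineq}; no spectral information about $d_2\hat L$ beyond what that proposition already provides is needed. The only step that uses the underlying geometry in a non-trivial way is the $L^1$-bootstrap, where the Gauduchon (rather than K\"ahler) hypothesis enters precisely through the vanishing of $\int_X P$ on functions; this is the place where one should expect the most bookkeeping, but no genuine obstacle arises.
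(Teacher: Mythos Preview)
Your proof is correct and follows essentially the same route as the paper: differentiate $\hat L(\varepsilon,f_\varepsilon)=0$, invoke Proposition~\ref{alpha-ineq} with $\alpha=1$, integrate to obtain an $L^2$-bound on $\eta$, and then apply Theorem~\ref{max-ineq} to upgrade to a sup-bound before converting back to $\varphi$ via Remark~\ref{norm-controle}. The only cosmetic differences are that the paper writes the pre-Theorem~\ref{max-ineq} inequality as $P(|\eta|^2)\le |\eta|^2+C_3$ (i.e.\ takes $\lambda=1$) rather than your $\lambda=0$ version, and leaves implicit the Gauduchon identity $\int_X P(u)\,dV_g=0$ that you spell out; neither changes the substance of the argument.
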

\begin{proof}
  Let $\varphi=\varphi_\varepsilon=\frac{d}{d\varepsilon}f_\varepsilon\in\mathrm{Herm}$. From $\hat{L}(\varepsilon,f)=f\circ L_varepsilon(f)=0$, we have
  \begin{gather*}
    0=\frac{d}{d\varepsilon}\hat{L}(\varepsilon,f)=d_2\hat{L}(\varepsilon,f)(\varphi)+f\circ\log f.
  \end{gather*}
  By Proposition \ref{alpha-ineq} ($\alpha=1$), we have
  \begin{gather*}
    P(|\eta|^2)+2\varepsilon|\eta|^2\le -2h_0(\log f,\eta)
  \end{gather*}
  where $\eta=f^{-1/2}\circ\varphi\circ f^{-1/2}$. Integrating both sides, and using $\varepsilon_0>0$
  \begin{gather*}
    2\varepsilon\|\eta\|_{L^2}^2\le 2C_1\|\eta\|_{L^2},~\|\eta\|_{L^2}\le C_2.
  \end{gather*}
  Furthermore, from Proposition \ref{alpha-ineq} ($\alpha=1$)
  \begin{gather*}
    P(|\eta|^2)\le -2h_0(\log f,\eta)\le |\eta|^2+C_3.
  \end{gather*}
  Applying Theorem \ref{max-ineq} and the above $L^2$-estimate of $\eta$, we obtain
  \begin{gather*}
     \sup_X|\eta|^2\le C_4.
  \end{gather*}
  The claim follows from this estimate since the norms of $\eta$ and $\varphi$ are equivalent.
\end{proof}

\begin{proposition}\label{L^p_2-estimates}
  For the solution $f=f_\varepsilon$ of the perturbed equation $L_\varepsilon(f)=0$ and $\varepsilon_0>0$, the following hold:
  \begin{flalign*}
    &(1)~\left\|\frac{d}{d\varepsilon}f_\varepsilon\right\|_{L^p_2}\le C_1\left(1+\|f_\varepsilon\|_{L^p_2}\right),&\\
    &(2)~\|f_\varepsilon\|_{L^p_2}\le C_2\left(1+\|f_1\|_{L^p_2}\right).&
  \end{flalign*}
\end{proposition}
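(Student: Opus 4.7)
The plan is to deduce (1) from the standard $L^p$-elliptic regularity applied to the second-order linear equation that $\varphi:=\tfrac{d}{d\varepsilon}f_\varepsilon$ satisfies, and then obtain (2) from (1) by integrating in $\varepsilon$ and applying a Gr\"{o}nwall-type inequality.

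For (1), differentiating $\hat{L}(\varepsilon,f_\varepsilon)=0$ in $\varepsilon$ and using $L_\varepsilon(f_\varepsilon)=0$ gives
\begin{gather*}
    -f\circ i\Lambda_g\bar{\partial}\bigl(f^{-1}\circ\varphi\circ f^{-1}\circ\partial_0 f-f^{-1}\circ\partial_0\varphi\bigr)+\tfrac{1}{2}f\circ\phi\otimes\phi^*_{h_0}\circ\varphi+\varepsilon\varphi+f\circ\log f=0.
\end{gather*}
The second-order principal part $f\circ i\Lambda_g\bar{\partial}(f^{-1}\circ\partial_0\varphi)$ is a uniformly elliptic operator whose symbol matches that of $P$ up to conjugation by $f^{\pm 1}$, and by the Corollary above together with Remark \ref{norm-controle} both $f^{\pm 1}$ are uniformly bounded in $L^\infty$ for $\varepsilon\in(\varepsilon_0,1]$. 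Applying the $L^p$-elliptic estimate $\|\varphi\|_{L^p_2}\le C(\|P\varphi\|_{L^p}+\|\varphi\|_{L^p})$ and isolating $P\varphi$ from the displayed equation, the right-hand side splits into: the source $f\circ\log f$, uniformly bounded in $L^\infty$ by the Corollary; the terms $\varepsilon\varphi$ and $f\circ\phi\otimes\phi^*_{h_0}\circ\varphi$, both bounded in $L^\infty$ via Lemma \ref{simple-estimate}; and expressions algebraically built from $f^{\pm 1}$, $\partial_0 f$, $\partial_0^2 f$, and $\partial_0\varphi$. Choosing $p>2n$ so that $L^p_1\hookrightarrow L^\infty$, the factor $\partial_0 f$ lies in $L^\infty$ with norm $\le C\|f\|_{L^p_2}$, the factor $\partial_0^2 f$ lies in $L^p$ with norm $\le\|f\|_{L^p_2}$, and an interpolation $\|\partial_0\varphi\|_{L^p}\le \delta\|\varphi\|_{L^p_2}+C(\delta)\|\varphi\|_{L^p}$ absorbs the remaining first-order factor of $\varphi$ into the left-hand side. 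Collecting the estimates yields (1).

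For (2), since $\tfrac{d}{d\varepsilon}f_\varepsilon=\varphi_\varepsilon$, from (1) we obtain
\begin{gather*}
    \|f_\varepsilon\|_{L^p_2}\le \|f_1\|_{L^p_2}+\int_\varepsilon^1\|\varphi_s\|_{L^p_2}\,ds\le \|f_1\|_{L^p_2}+C_1\int_\varepsilon^1\bigl(1+\|f_s\|_{L^p_2}\bigr)\,ds,
\end{gather*}
and applying the backward Gr\"{o}nwall inequality on $(\varepsilon_0,1]$ to $g(\varepsilon):=\|f_\varepsilon\|_{L^p_2}$ gives $g(\varepsilon)\le C_2(1+\|f_1\|_{L^p_2})$, as required. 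The main difficulty is the bookkeeping in (1): after isolating the principal part one must verify that every coefficient of $\varphi$, $\partial_0\varphi$, or the source involves at most one derivative of $f$ in $L^\infty$ (supplied by $L^p_1\hookrightarrow L^\infty$) and at most two derivatives of $f$ in $L^p$, which is exactly the control provided by the $\|f\|_{L^p_2}$ factor on the right-hand side.
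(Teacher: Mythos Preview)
Your overall architecture for (1) and (2) is right, and part (2) via Gr\"onwall is equivalent to the paper's differential-inequality argument. The issue is the absorption step in (1).

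After isolating $P_0\varphi$ (with $P_0=i\Lambda_g\bar\partial\partial_0$) you face bilinear terms of the schematic types $\partial f\cdot\partial\varphi$ and $\partial f\cdot\partial f$. With your splitting $L^\infty\times L^p$ and the interpolation $\|\partial_0\varphi\|_{L^p}\le\delta\|\varphi\|_{L^p_2}+C(\delta)\|\varphi\|_{L^p}$, the first type contributes
\[
\|\partial f\|_{L^\infty}\,\|\partial\varphi\|_{L^p}\ \le\ C\,\|f\|_{L^p_2}\bigl(\delta\|\varphi\|_{L^p_2}+C(\delta)\bigr),
\]
so the term to be absorbed is $C\delta\,\|f\|_{L^p_2}\,\|\varphi\|_{L^p_2}$: its coefficient is not small because $\|f\|_{L^p_2}$ is precisely the unknown quantity. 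No choice of $\delta$ makes this $<\tfrac12$ uniformly in $\varepsilon\in(\varepsilon_0,1]$. Likewise, your $L^\infty\times L^p$ treatment of the $\partial f\cdot\partial f$ term only gives a bound $C\|f\|_{L^p_2}^2$, which is quadratic rather than linear; plugging a quadratic right-hand side into the Gr\"onwall step in (2) does not yield a uniform bound on $(\varepsilon_0,1]$ (the associated ODE can blow up).

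The paper avoids both problems by splitting the bilinear terms via H\"older as $L^{2p}\times L^{2p}$ and then invoking the Gagliardo--Nirenberg type interpolation
\[
\|u\|_{L^{2p}_1}\ \le\ C\bigl(\|u\|_{L^p_2}^{1/2}+1\bigr),
\]
which is available precisely because $\sup_X|f_\varepsilon|$ and $\sup_X|\varphi_\varepsilon|$ are already controlled (Remark~\ref{norm-controle} and Lemma~\ref{simple-estimate}). This produces $\|P_0\varphi\|_{L^p}\le C\bigl(\|\varphi\|_{L^{2p}_1}\|f\|_{L^{2p}_1}+\|f\|_{L^{2p}_1}^2+1\bigr)$, hence an inequality with only $\|\varphi\|_{L^p_2}^{1/2}$ on the right, and Young's inequality then absorbs it cleanly, yielding the \emph{linear} estimate $\|\varphi\|_{L^p_2}\le C(1+\|f\|_{L^p_2})$ that your Gr\"onwall argument in (2) actually needs. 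Reworking your proof with this $L^{2p}$--interpolation step in place of the $L^\infty\times L^p$ splitting closes the gap.
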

\begin{proof}
  (1) Let $P_0=i\Lambda_g\bar{\partial}\partial_0$. Using the Elliptic Regularity Theorem and Lemma \ref{simple-estimate}, we obtain the estimate for $\varphi=\varphi_\varepsilon=\frac{d}{d\varepsilon}f_\varepsilon$
  \begin{gather*}
    \|\varphi\|_{L^p_2}\le C_1(\|P_0\varphi\|_{L^p}+1).
  \end{gather*}
  Differentiating the perturbed equation $L_\varepsilon(f)=0$ with respect to $\varepsilon$
  \begin{gather*}
    \begin{aligned}
      P_0\varphi=&~i\Lambda_g(\bar{\partial}\varphi\circ f^{-1}\circ\partial_0f-\bar{\partial}f\circ f^{-1}\circ\varphi\circ f^{-1}\circ\partial_0f+\bar{\partial}f\circ f^{-1}\circ\partial_0\varphi)\\
      &\quad -\varphi\circ i\Lambda_g F_{h_0}-\frac{1}{2}(\varphi\circ\phi\otimes\phi^*_{h_0}\circ f+f\circ\phi\otimes\phi^*_{h_0}\circ\varphi)\\
      &\quad -f\circ\log f-\varepsilon(\varphi\circ\log f+\varphi).
    \end{aligned}
  \end{gather*}
  Using the Hölder inequality, Lemma \ref{simple-estimate}, and the uniform estimate of $\log f$
  \begin{gather*}
    \|P_0\varphi\|_{L^p}\le C_2(\|\varphi\|_{L^{2p}_1}\|f\|_{L^{2p}_1}+\|f\|_{L^{2p}_1}^2+1).
  \end{gather*}
  According to the above estimates and calculation
  \begin{gather*}
    \begin{aligned}
      \|\varphi\|_{L^p_2}&\le C_1\left(C_2(\|\varphi\|_{L^{2p}_1}\|f\|_{L^{2p}_1}+\|f\|_{L^{2p}_1}^2+1)+1\right)\\
      &\le C_3\left((\|\varphi\|_{L^p_2}^{\frac{1}{2}}+1)(\|f\|_{L^p_2}^{\frac{1}{2}}+1)+(\|f\|_{L^p_2}^{\frac{1}{2}}+1)^2+1\right)\\
      &\le C_3\left(\|\varphi\|_{L^p_2}^{\frac{1}{2}}(\|f\|_{L^p_2}^{\frac{1}{2}}+1)+(\|f\|_{L^p_2}^{\frac{1}{2}}+1+1)^2\right)\\
      &\le C_3\left(\frac{1}{2C_3}\|\varphi\|_{L^p_2}+\frac{C_3}{2}(\|f\|_{L^p_2}^{\frac{1}{2}}+1)^2+(\|f\|_{L^p_2}^{\frac{1}{2}}+2)^2\right).
    \end{aligned}
  \end{gather*}
  In transforming from line 1 to 2, we apply the following Interpolation Theorem:
  \begin{gather*}
    \|\varphi\|_{L^{2p}_1}\le C(\|\varphi\|_{L^p_2}^{\frac{1}{2}}+1),~\|f\|_{L^{2p}_1}\le C^\prime(\|f\|_{L^p_2}^{\frac{1}{2}}+1).
  \end{gather*}
  Transforming the above estimate of $\varphi$
  \begin{gather*}
    \|\varphi\|_{L^p_2}\le C_4(\|f\|_{L^p_2}+\|f\|_{L^p_2}^{\frac{1}{2}}+1)
  \end{gather*}
  and using $\sqrt{x}\le x+1$ for $x\in\mathbb{R}_{\ge 0}$, we obtain (1).

  \medskip
  \noindent
  (2) From $\left|\frac{d}{d\varepsilon}\|f_\varepsilon\|\right|\le \|\frac{d}{d\varepsilon}f_\varepsilon\|$ and (1), we obtain the following inequality:
  \begin{gather*}
    \frac{d}{d\varepsilon}\|f\|_{L^p_2}\ge -\|\varphi\|_{L^p_2}\ge -C(1+\|f\|_{L^p_2}).
  \end{gather*}
  This leads to
  \begin{gather*}
    \log\frac{1+\|f_1\|_{L^p_2}}{1+\|f_\varepsilon\|_{L^p_2}}=\int_{\varepsilon}^{1}\frac{\frac{d}{d\varepsilon^\prime}\|f_{\varepsilon^\prime}\|_{L^p_2}}{1+\|f_{\varepsilon^\prime}\|_{L^p_2}}d\varepsilon^\prime\ge -C(1-\varepsilon)
  \end{gather*}
  which yields
  \begin{gather*}
    \frac{1+\|f_1\|_{L^p_2}}{1+\|f_\varepsilon\|_{L^p_2}}\ge e^{-C(1-\varepsilon)}.
  \end{gather*}
  Thus, (2) follows.
\end{proof}

\begin{theorem}\label{closed-condition}
  $J\subset(0,1]$ is a closed subset, and $J=(0,1]$.
\end{theorem}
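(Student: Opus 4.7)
The plan is to apply the continuity method: combine the openness from Theorem \ref{open-condition} with closedness of $J$ (together with $1\in J$ and connectedness of $(0,1]$) to obtain $J=(0,1]$. Since we already know $J=(\varepsilon_0,1]$, closedness in $(0,1]$ is equivalent to $\varepsilon_0=0$, which I would prove by contradiction: assuming $\varepsilon_0>0$, I produce a smooth solution of $L_{\varepsilon_0}(f)=0$ and then invoke Theorem \ref{open-condition} at this solution to extend the family strictly below $\varepsilon_0$, contradicting $\varepsilon_0=\inf J$.

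Pick any sequence $\varepsilon_n\downarrow\varepsilon_0$ and set $f_n:=f_{\varepsilon_n}$. Under the assumption $\varepsilon_0>0$, Proposition \ref{triple-ineq}(2) gives the uniform bound $\sup_X|\log f_n|\le\varepsilon_0^{-1}\sup_X|K^0_{h_0}|$, while Proposition \ref{L^p_2-estimates}(2) (with $f_1$ smooth, so $\|f_1\|_{L^p_2}<\infty$ for every $p$) gives $\|f_n\|_{L^p_2}\le C(p)$ for every $p\in(1,\infty)$. Choosing $p$ so large that $L^p_2\hookrightarrow C^{0,\alpha}$ compactly, I extract a subsequence with $f_n\to f_{\varepsilon_0}$ weakly in $L^p_2$ and strongly in $C^{0,\alpha}$. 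The uniform bound on $|\log f_n|$ transfers under $C^0$-convergence, forcing $f_{\varepsilon_0}\in\mathrm{Herm}^+$.

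Next I pass to the limit in $L_{\varepsilon_n}(f_n)=0$. The lower-order pieces $\phi\otimes\phi^*_{h_0\cdot f_n}$ and $\varepsilon_n\log f_n$ converge in $C^0$ by the positivity of $f_{\varepsilon_0}$, and $f_n^{-1}\circ\partial_0 f_n$ converges in every $L^q$ (strong $C^0$-convergence of $f_n^{-1}$ and weak $L^p_1$-convergence of $\partial_0 f_n$), so $i\Lambda_g\bar{\partial}(f_n^{-1}\circ\partial_0 f_n)$ converges weakly in $L^p$. Thus $L_{\varepsilon_0}(f_{\varepsilon_0})=0$ holds in $L^p$, and an elliptic bootstrap—exploiting ellipticity of the principal part $i\Lambda_g\bar{\partial}\partial_0$ with nonlinear coefficients controlled by the positive-definite $f_{\varepsilon_0}$—upgrades $f_{\varepsilon_0}$ to a smooth Hermitian solution.

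With a smooth solution $f_{\varepsilon_0}$ of $L_{\varepsilon_0}(f)=0$ in hand, the openness mechanism of Theorem \ref{open-condition} applies verbatim at $(\varepsilon_0,f_{\varepsilon_0})$: Proposition \ref{alpha-ineq} with $\alpha=0$ yields injectivity, hence bijectivity (being Fredholm of index zero), of $d_2\hat{L}(\varepsilon_0,f_{\varepsilon_0})$, and the Implicit Function Theorem produces a smooth family of solutions on $(\varepsilon_0-\delta,\varepsilon_0+\delta)$ for some $0<\delta<\varepsilon_0$. Local uniqueness (from the same injectivity) identifies this family with $f_\varepsilon$ on the overlap, so gluing yields a smooth family on $(\varepsilon_0-\delta,1]$ and hence $\varepsilon_0-\delta\in J$, contradicting $\varepsilon_0=\inf J$. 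The main technical concern is ruling out eigenvalue collapse of the limit—ensuring $f_{\varepsilon_0}\in\mathrm{Herm}^+$ and not merely $\mathrm{Herm}$—which is precisely what the quantitative estimate $\sup_X|\log f_\varepsilon|\le\varepsilon^{-1}\sup_X|K^0_{h_0}|$ provides, an estimate whose utility visibly relies on the contradiction hypothesis $\varepsilon_0>0$.
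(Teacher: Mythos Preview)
Your proposal is correct and follows essentially the same route as the paper: assume $\varepsilon_0>0$, use the uniform $L^p_2$-bound from Proposition~\ref{L^p_2-estimates}(2) with $p>2n$ together with elliptic regularity to produce a smooth solution $f_{\varepsilon_0}$ of $L_{\varepsilon_0}(f)=0$, and then contradict the openness of $J$ established in Theorem~\ref{open-condition}. The paper compresses your compactness, limit-passage, and bootstrap steps into a single appeal to ``the Elliptic Regularity Theorem'' and phrases the contradiction as $J=[\varepsilon_0,1]$ rather than extending below $\varepsilon_0$, but the substance is identical.
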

\begin{proof}
  Assume that $J=(\varepsilon_0,1]$ with $\varepsilon_0>0$. Then all we need to show is that the solution $f=f_\varepsilon$ of the perturbed equation actually extends to $[\varepsilon_0,1]$, contradicting the openness of $J$. By Proposition \ref{L^p_2-estimates} (2) ($p>2n$) and the Elliptic Regularity Theorem, there exists $\lim_{\varepsilon\rightarrow\varepsilon_0}f_\varepsilon=f_{\varepsilon_0}$, and its limit gives the solution of the equation $L_{\varepsilon_0}(f)=0$. This means that $J=[\varepsilon_0,1]$.
\end{proof}

\subsection{The Construction of a Destabilizing Subsheaf}\label{sol-uniform}
In this section, we will show the uniform $L^2$-estimate of the solution to the perturbed equation \eqref{perturbed-equation}, building on the proofs from \cite{MR944577}, \cite{MR1085139}, \cite{MR4309984}.

By diagonalizing the Hermitian transformation, any $s\in\mathrm{Herm}$ can be expressed by $s=\sum\lambda_ie_i\otimes\theta^i$, where $\lambda_i$ are eigenvalues of $s$.

\begin{definition}\label{fiber-map}
  Given a smooth function $f:\mathbb{R}\rightarrow\mathbb{R}$, we define $f:\mathrm{Herm}\rightarrow\mathrm{End}E$ by
  \begin{gather*}
    f(s)=\sum_{i=1}^{r}f(\lambda_i)e_i\otimes\theta^i.
  \end{gather*}
  We also define $F:\mathrm{Herm}\rightarrow\mathrm{Herm}(\mathrm{End}E)$ from the smooth function $F:\mathbb{R}\times\mathbb{R}\rightarrow\mathbb{R}$ as follows:
  \begin{gather*}
    F(s)(A)=\sum_{i,j=1}^{r}F(\lambda_j,\lambda_i)A^i_je_i\otimes\theta^j
  \end{gather*}
  where $A=\sum A^i_je_i\otimes\theta^j\in\mathrm{End}E$. Furthermore, let $df:\mathbb{R}\times\mathbb{R}\rightarrow\mathbb{R}$ be defined by
  \begin{gather*}
    df(x,y)=
    \left\{
      \begin{aligned}
        \frac{f(x)-f(y)}{x-y}&\text{ if }x\neq y\\
        \frac{df}{dx}~~~~~~&\text{ if }x=y.
      \end{aligned}
    \right.
  \end{gather*}\qed
\end{definition}

We define a smooth function $\Psi:\mathbb{R}\times\mathbb{R}\rightarrow\mathbb{R}$ by
  \begin{gather*}
    \Psi(x,y)=
    \left\{
      \begin{aligned}
        \frac{e^{y-x}-1}{y-x}&\text{ if }x\neq y\\
        1~~~~~&\text{ if }x=y.
      \end{aligned}
    \right.
  \end{gather*}

\begin{lemma}[Nie-X.Zhang {\cite[p.635]{MR3745874}}] For $f\in\mathrm{Herm}^+,~s=\log f\in\mathrm{Herm}$, the following holds:
  \begin{gather*}
    i\Lambda_g\mathrm{tr}((f^{-1}\circ\partial_0f)\circ\bar{\partial}s)=\left<\Psi(s)(\bar{\partial}s),\bar{\partial}s\right>_{h_0,g}
  \end{gather*}
  where the inner product $\left<\cdot,\cdot\right>_{h_0,g}$ on the right-hand side is induced from $h_0,g$.
\end{lemma}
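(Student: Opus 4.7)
The plan is to reduce the claim to a pointwise identity in a unitary frame that diagonalizes $s$, after expressing $f^{-1}\circ\partial_0 f$ in terms of $\partial_0 s$ via the Duhamel formula for the matrix exponential. Writing $f=e^s$, one has
\begin{gather*}
\partial_0 f=\int_0^1 e^{(1-u)s}\circ\partial_0 s\circ e^{us}\,du,
\end{gather*}
so that
\begin{gather*}
f^{-1}\circ\partial_0 f=\int_0^1 e^{-us}\circ\partial_0 s\circ e^{us}\,du.
\end{gather*}

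Next, fix a point $x\in X$ and choose an $h_0$-unitary frame $\{e_i\}$ of $E$ at $x$ diagonalizing $s$, that is $s=\sum_i\lambda_i\,e_i\otimes\theta^i$. In this frame $e^{us}$ is diagonal with entries $e^{u\lambda_i}$, so conjugating the matrix $\partial_0 s=\sum(\partial_0 s)^i_j\,e_i\otimes\theta^j$ and integrating in $u$ yields
\begin{gather*}
(f^{-1}\circ\partial_0 f)^i_j=\left(\int_0^1 e^{u(\lambda_j-\lambda_i)}\,du\right)(\partial_0 s)^i_j=\Psi(\lambda_i,\lambda_j)\,(\partial_0 s)^i_j,
\end{gather*}
the integral evaluating to $\Psi(\lambda_i,\lambda_j)$ even in the limiting case $\lambda_i=\lambda_j$. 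Because $s$ is Hermitian and the frame is $h_0$-unitary, $(\partial_0 s)^i_j=\overline{(\bar\partial s)^j_i}$, and taking the trace gives
\begin{gather*}
\mathrm{tr}\bigl((f^{-1}\circ\partial_0 f)\circ\bar\partial s\bigr)=\sum_{i,j}\Psi(\lambda_i,\lambda_j)\,\overline{(\bar\partial s)^j_i}\wedge(\bar\partial s)^j_i.
\end{gather*}

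Finally, I would apply $i\Lambda_g$ term by term using the standard pointwise identity $i\Lambda_g(\bar\alpha\wedge\alpha)=|\alpha|^2_g$ valid for any $(0,1)$-form $\alpha$, which yields
\begin{gather*}
i\Lambda_g\,\mathrm{tr}\bigl((f^{-1}\circ\partial_0 f)\circ\bar\partial s\bigr)=\sum_{i,j}\Psi(\lambda_i,\lambda_j)\,|(\bar\partial s)^j_i|^2_g.
\end{gather*}
After relabeling $(i,j)\leftrightarrow(j,i)$, this matches the expression $\sum_{a,b}\Psi(\lambda_b,\lambda_a)|(\bar\partial s)^a_b|^2_g$, which is exactly $\langle\Psi(s)(\bar\partial s),\bar\partial s\rangle_{h_0,g}$ by the convention of Definition \ref{fiber-map}. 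There is no real analytic obstacle; the only point requiring care is the index bookkeeping, since the arguments of $\Psi$ must appear in the order prescribed by Definition \ref{fiber-map} on the right-hand side, while the Duhamel integral naturally produces the opposite order on the left.
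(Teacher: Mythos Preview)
Your argument is correct: the Duhamel formula $f^{-1}\partial_0 f=\int_0^1 e^{-us}(\partial_0 s)e^{us}\,du$ together with diagonalization of $s$ in an $h_0$-unitary frame is exactly the standard route, and your index bookkeeping (including the swap needed to match the convention $F(s)(A)^i_j=F(\lambda_j,\lambda_i)A^i_j$ of Definition~\ref{fiber-map}) is accurate. The paper itself gives no proof of this lemma at all---it simply attributes the identity to Nie--Zhang \cite[p.~635]{MR3745874}---so there is nothing further to compare; your write-up supplies precisely the computation that the citation stands in for.
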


Taking the inner product of the pertured equation \eqref{perturbed-equation} with $s=\log f$, integrating, and using the above lemma, we obtain
\begin{gather}\label{eq-1}
  -\varepsilon\|s\|_{L^2}^2=\int_X\left<K^0,s\right>_{h_0}+\int_X\left<\Psi(s)(\bar{\partial}s),\bar{\partial}s\right>_{h_0,g}
\end{gather}
where $K^0:=i\Lambda_g F_{h_0}-\frac{\tau}{2}\mathrm{id}_E+\frac{1}{2}\phi\otimes\phi^*_{h_0}$.

\begin{theorem}\label{L^2-uniform}
  Let $f=f_\varepsilon$ be the solution of the perturbed equation $L_\varepsilon(f)=0$ on a holomorphic pair $(E,\phi)$. If the holomorphic pair is $\tau$-stable, then there exists a positive constant $C$ independent of $\varepsilon$ such that
  \begin{gather*}
    \|\log f_\varepsilon\|_{L^2}\le C\text{ for all }\varepsilon\in(0,1].
  \end{gather*}
\end{theorem}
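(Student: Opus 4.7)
The plan is a proof by contradiction along the lines of Simpson, Uhlenbeck-Yau, and Bradlow, adapted to the Gauduchon setting in the style of Li-Yau, with care taken to incorporate the holomorphic section $\phi$. Suppose the conclusion fails, so there is a sequence $\varepsilon_i\downarrow 0$ with $l_i:=\|\log f_{\varepsilon_i}\|_{L^2}\to\infty$. Write $s_i:=\log f_{\varepsilon_i}$ and normalize by $u_i:=s_i/l_i$, so $\|u_i\|_{L^2}=1$. Proposition \ref{triple-ineq}(3) gives the uniform $L^\infty$ bound $\sup_X|u_i|\le C$. The identity \eqref{eq-1} reads
\begin{gather*}
\int_X\langle\Psi(s_i)(\bar\partial s_i),\bar\partial s_i\rangle_{h_0,g}+\int_X\langle K^0,s_i\rangle_{h_0}=-\varepsilon_i l_i^2\le 0,
\end{gather*}
which, via the asymptotic analysis of $\Psi(l\lambda,l\mu)$ as $l\to\infty$ (Simpson's analytic lemma), yields a uniform $L^2_1$ bound on the $u_i$. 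After passing to a subsequence, $u_i\rightharpoonup u_\infty$ weakly in $L^2_1$ and strongly in $L^2$, with $u_\infty$ self-adjoint, bounded, and $\|u_\infty\|_{L^2}=1$.

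The limit inherits two crucial properties. Simpson's lemma converts the displayed identity into
\begin{gather*}
\int_X\langle\Phi(u_\infty)(\bar\partial u_\infty),\bar\partial u_\infty\rangle_{h_0,g}+\int_X\langle K^0,u_\infty\rangle_{h_0}\le 0
\end{gather*}
for every smooth $\Phi:\mathbb{R}\times\mathbb{R}\to\mathbb{R}$ satisfying $\Phi(\lambda,\mu)<(\mu-\lambda)^{-1}$ for $\lambda<\mu$, and forces $u_\infty$ to have constant eigenvalues $\lambda_1<\cdots<\lambda_k$ across $X$. Each spectral projection $\pi_\alpha:=\chi_{(-\infty,\lambda_\alpha]}(u_\infty)$ is then a weakly holomorphic $L^2_1$-projection, and the Uhlenbeck-Yau regularity theorem (still valid on Gauduchon manifolds) promotes it to a torsion-free coherent subsheaf $\mathcal{G}_\alpha\subset E$ with torsion-free quotient.

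To conclude, decompose $u_\infty$ as a positive combination of the $(\mathrm{id}-\pi_\alpha)$ and expand the displayed inequality using the Gauduchon Chern-Weil identity $\deg\mathcal{G}_\alpha=\int_X\mathrm{tr}(\pi_\alpha\cdot i\Lambda_gF_{h_0})-\|\bar\partial\pi_\alpha\|^2_{L^2}$, together with the non-negative contribution $\int_X h_0(\pi_\alpha\phi,\phi)\ge 0$ from the section term. After rearranging, the inequality reduces to a strictly positive linear combination of the quantities
\begin{gather*}
\frac{\tau}{4\pi}\mathrm{Vol}_g(X)-\mu(\mathcal{G}_\alpha)\quad(\alpha\text{ with }\phi\notin\mathcal{G}_\alpha),\\
\mu(E/\mathcal{G}_\alpha)-\frac{\tau}{4\pi}\mathrm{Vol}_g(X)\quad(\alpha\text{ with }\phi\in\mathcal{G}_\alpha),
\end{gather*}
all of which are strictly positive by $\tau$-stability, giving the contradiction.

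The main obstacle is the bookkeeping at the end: each spectral splitting places $\phi$ on precisely one side, and both halves $\mu_M<\frac{\tau}{4\pi}\mathrm{Vol}_g(X)<\mu_m(\phi)$ of $\tau$-stability are required to handle both cases in the sum, so --- unlike the classical Hermitian-Einstein setting --- neither inequality alone would suffice. The analytic heart of the argument, namely Simpson's asymptotic lemma for $\Psi$ and the Uhlenbeck-Yau regularity theorem for weakly holomorphic projections, carries over from the K\"{a}hler case essentially unchanged; however, every integration by parts must be executed using only the Gauduchon identity $\partial\bar\partial\omega_g^{n-1}=0$, and the contribution of $\phi\otimes\phi^*_{h_0}$ must be tracked carefully through the spectral decomposition of $u_\infty$.
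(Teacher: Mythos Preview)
Your outline matches the paper's proof closely: both run the Simpson--Bradlow contradiction argument, normalizing $\log f_{\varepsilon_i}$, extracting a weak $L^2_1$-limit $u_\infty$ with constant eigenvalues via \eqref{eq-1} and the asymptotics of $\Psi$, applying Uhlenbeck--Yau regularity to the spectral projections, and reading off a destabilizing filtration that violates $\tau$-stability.

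The one place your sketch is thinner than the paper: you pass to the limit in \eqref{eq-1} with the section contribution left inside $K^0$, obtaining only the two-term inequality with $\Phi$ and $K^0$. The paper instead separates the $\phi$-term---which in the perturbed equation is $\phi\otimes\phi^*_{h_0\cdot f_i}$, not $\phi\otimes\phi^*_{h_0}$---and, in parallel with the test functions $\xi$ (your $\Phi$), introduces a second family $\eta:\mathbb{R}\to\mathbb{R}_{\ge 0}$ vanishing on $(-\infty,\delta]$; this yields the additional limit inequality $\int_X\langle\eta(u_\infty)\phi,\phi\rangle_{h_0}\le C$ for all such $\eta$. Letting $\eta$ blow up on the positive spectrum is exactly what forces $\phi$ into the non-positive eigenspaces of $u_\infty$, hence into the appropriate $\mathcal{G}_\alpha$. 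Your final paragraph correctly asserts that ``each spectral splitting places $\phi$ on precisely one side'' and that both halves of $\tau$-stability are needed, but your displayed limiting inequality alone does not supply that placement; the $\eta$-trick (Bradlow \cite[\S3.9--3.13]{MR1085139}) is the missing link. The paper also treats separately the degenerate case where $u_\infty$ has a single (necessarily negative) eigenvalue, which gives $\mu(E)\ge\frac{\tau}{4\pi}\mathrm{Vol}_g(X)$ directly.
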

\begin{proof}
  We show the contraposition. Let $\limsup_{\varepsilon\rightarrow 0}\|\log f_{\varepsilon}\|_{L^2}=\infty$, and take a subsequence $\varepsilon_i\rightarrow 0$ as $i\rightarrow\infty$ such that $\lim_{i\rightarrow \infty}\|\log f_{\varepsilon_i}\|_{L^2}=\infty$. We set
  \begin{gather*}
    f_i:=f_{\varepsilon_i},~s_i:=\log f_i,~l_i:=\|\log f_i\|_{L^2},~u_i:=\frac{s_i}{l_i}.
  \end{gather*}
  In particular, $\sup_X|u_i|\le C$ from Theorem \ref{triple-ineq} (3). Also, from \eqref{eq-1}
  \begin{gather*}
    \begin{aligned}
      -\varepsilon_il_i+\frac{1}{2el_i}\|\phi\|^2_{L^2}=&\int_X\left<i\Lambda_g F_{h_0}-\frac{\tau}{2}\mathrm{id}_E,u_i\right>
      +\int_X \left<l_i\Psi(l_iu_i)(\bar{\partial}u_i),\bar{\partial}u_i\right>\\
      &\quad +\int_X\frac{1}{2}\left(\left<\phi\otimes\phi^*_{h_0}\circ f_i,u_i\right>+\frac{1}{el_i}|\phi|^2\right).
    \end{aligned}
  \end{gather*}
  Let $\xi:\mathbb{R}\times\mathbb{R}\rightarrow\mathbb{R}_{\ge 0}$ be an arbitrary smooth non-negative function satisfying $\xi(x,y)<(x-y)^{-1}$ when $x>y$. From the behaviour of $\Psi$
  \begin{gather*}
    l_i\Psi(l_ix,l_iy)\rightarrow
    \left\{
    \begin{aligned}
      \frac{1}{x-y}&\text{ for }x>y\\
      \infty~~&\text{ for }x\le y,
    \end{aligned}
    \right.
  \end{gather*}
  we can assume that
  \begin{gather*}
    \xi(x,y)<l_i\Psi(l_ix,l_iy)
  \end{gather*}
  for sufficiently large $i$. Let $\eta:\mathbb{R}\rightarrow\mathbb{R}_{\ge 0}$ be an arbitrary smooth non-negative function satisfying $\eta(x)=0$ when $x\le \delta$, where $\delta$ is a sufficiently small positive constant. Then for sufficiently large $i$, we have
  \begin{gather*}
    \begin{aligned}
      \eta(x)&\le\frac{1}{2}\left(e^{l_ix}x+\frac{1}{el_i}\right),\\
      \left<\eta(u_i)(\phi),\phi\right>&\le\frac{1}{2}\left(\left<\phi\otimes\phi^*_{h_0}\circ f_i,u_i\right>+\frac{1}{el_i}|\phi|^2\right).
    \end{aligned}
  \end{gather*}
  Hence we obtain the following inequality.
  \begin{gather}\label{ineq-1}
    \begin{aligned}
      \int_X\left<i\Lambda_g F_{h_0}-\frac{\tau}{2}\mathrm{id}_E,u_i\right>&+\int_X\left<\xi(u_i)(\bar{\partial}u_i),\bar{\partial}u_i\right>\\
      &+\int_X\left<\eta(u_i)(\phi),\phi\right>\le -\varepsilon_il_i+\frac{1}{2el_i}\|\phi\|^2_{L^2}
    \end{aligned}
  \end{gather}
  Since $\sup_X|u_i|\le C$, there exists a positive constant $c$ such that \\
  $\left<\xi(u_i)(\bar{\partial}u_i),\bar{\partial}u_i\right>\ge c\left<\bar{\partial}u_i,\bar{\partial}u_i\right>$. Then, if $\eta=0$, there exists $u_\infty\in L^2_1$ such that
  \begin{gather*}
    u_i\rightharpoonup u_\infty\text{ in }L^2_1.
  \end{gather*}
  Moreover, $u_\infty\neq 0$ since $\|u_i\|_{L^2}=1$. Taking the limit as $i\rightarrow\infty$ in inequality \eqref{ineq-1}, we obtain (see \cite[Lemma 5.4]{MR944577})
  \begin{equation}\label{int-ineq}
    \begin{aligned}
      \int_X\left<i\Lambda_g F_{h_0}-\frac{\tau}{2}\mathrm{id}_E,u_\infty\right>_{h_0}&+\int_X\left<\xi(u_\infty)(\bar{\partial}u_\infty),\bar{\partial}u_\infty\right>_{h_0,g}\\
      &+\int_X\left<\eta(u_\infty)(\phi),\phi\right>_{h_0}\le 0.
    \end{aligned}
  \end{equation}

  The eigenvalues of $u_\infty$ are constant almost everywhere and have a non-positive eigenvalue from \cite[Lemma 3.9.2, Proposition 3.10.2]{MR1085139}. We consider two cases for the eigenvalues of $u_\infty$.

  \noindent
  \textbf{Case 1.}
  Suppose all eigenvalues of $u_\infty$ are equal. Then we can write $u_\infty=\lambda\cdot\mathrm{id}_E< 0$. For $\nu:=\lambda\cdot\mathrm{rank}E\left(\mu(E)-\frac{\tau}{4\pi}\mathrm{Vol}_g(X)\right)$, we have
  \begin{gather*}
      2\pi\nu=\lambda\cdot\mathrm{rank}E\left(2\pi\deg E-\frac{\tau}{2}\mathrm{Vol}_g(X)\right)=\int_X\mathrm{tr}\left((i\Lambda_g F_{h_0}-\frac{\tau}{2}\mathrm{id}_E)\circ u_\infty\right).
  \end{gather*}
  From the \eqref{int-ineq} with $\xi=\eta=0$, then $\nu\le 0$. Since $\lambda<0$,
  \begin{gather*}
    \mu(E)-\frac{\tau}{4\pi}\mathrm{Vol}_g(X)\ge 0.
  \end{gather*}
  This implies the $\tau$-unstability of the holomorphic pair $(E,\phi)$.

  \noindent
  \textbf{Case 2.}
  Suppose $u_\infty$ has different eigenvalues. Let the eigenvalues of $u_\infty$ be $\lambda_1<\lambda_2<\cdots<\lambda_l~(2\le l\le r)$ and $P_i:\mathbb{R}\rightarrow\mathbb{R}~(1\le i\le l-1)$ be a smooth function satisfying
  \begin{gather*}
    P_i(x)=1\text{ on }x\le \lambda_i,~P_i(x)=0\text{ on }x\ge \lambda_{i+1}.
  \end{gather*}
  We set $\pi_i:=P_i(u_\infty)$, and according to \cite[Proposition 3.10.2]{MR1085139}, $\pi_i$ are weakly holomorphic subsheaves ($\pi_i\in L^2_1\mathrm{End}E$, $\pi_i^2=\pi_i=\pi_i^*$ and $(\mathrm{id}_E-\pi_i)\circ\bar{\partial}\pi_i=0$ in $L^1$). By Uhlenbeck-Yau's regularity statement of $L^2_1$ subsheaf \cite{MR861491}, \cite{MR2180378}, $\pi_i$ determines subsheaves of $E$. These subsheaves are denoted by $E_i=\pi_i(E)$. We define
  \begin{gather*}
    \begin{aligned}
      \nu:=&\lambda_l\cdot\mathrm{rank}E\left(\mu(E)-\frac{\tau}{4\pi}\mathrm{Vol}_g(X)\right)\\
      &\quad -\sum_{i=1}^{l-1}(\lambda_{i+1}-\lambda_i)\cdot\mathrm{rank}E_i\left(\mu(E_i)-\frac{\tau}{4\pi}\mathrm{Vol}_g(X)\right).
    \end{aligned}
  \end{gather*}
  From \cite[Lemma 3.12.1]{MR1085139},
  \begin{gather*}
    \nu\le 0.
  \end{gather*}
  This implies the $\tau$-unstability of the holomorphic pair $(E,\phi)$ (see \cite[Lemma 3.13.1]{MR1085139}).
\end{proof}

\begin{corollary}
  For the solution $f=f_\varepsilon$ of the perturbed equation $L_\varepsilon(f)=0$, there exists a positive constant $C$ such that
  \begin{gather*}
     \sup_X|\log f_\varepsilon|\le C~\text{for all }\varepsilon\in(0,1].
  \end{gather*}  
\end{corollary}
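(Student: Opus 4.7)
The statement is an immediate combination of two results already established in the excerpt, so the proof proposal is essentially a one-line plan. My approach is to combine the uniform $L^{2}$-estimate of $\log f_{\varepsilon}$ (Theorem \ref{L^2-uniform}), which used $\tau$-stability, with the $L^{2} \to L^{\infty}$ bound provided by Proposition \ref{triple-ineq}~(3), which was a consequence of the Gilbarg--Trudinger sup estimate (Theorem \ref{max-ineq}) applied to the differential inequality for $|\log f_{\varepsilon}|^{2}$.

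More precisely, I would first invoke Theorem \ref{L^2-uniform} to obtain a constant $C_{1} > 0$, independent of $\varepsilon$, such that
\begin{gather*}
\|\log f_{\varepsilon}\|_{L^{2}} \le C_{1} \quad \text{for all } \varepsilon \in (0,1].
\end{gather*}
Then I would feed this into Proposition \ref{triple-ineq}~(3), which yields
\begin{gather*}
\sup_{X}|\log f_{\varepsilon}| \le C\bigl(\|\log f_{\varepsilon}\|_{L^{2}} + \sup_{X}|K^{0}_{h_{0}}|\bigr) \le C\bigl(C_{1} + \sup_{X}|K^{0}_{h_{0}}|\bigr),
\end{gather*}
and the right-hand side is a constant independent of $\varepsilon$ because $h_{0}$ is fixed once and for all.

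There is no real obstacle here, because both ingredients have already been carried through under exactly the hypotheses needed: the $\varepsilon$-independence of the constant in Proposition \ref{triple-ineq}~(3) comes from Theorem \ref{max-ineq} and the fact that $K_{h_{0}}^{0}$ does not depend on $\varepsilon$, while Theorem \ref{L^2-uniform} supplied the $L^{2}$ input through the $\tau$-stability assumption of the main theorem. The genuine work lay in Theorem \ref{L^2-uniform} (the weak limit argument producing either a contradiction to $\tau$-stability via Case~1 or via a destabilizing subsheaf constructed from the spectral projections $\pi_{i}$ in Case~2); once that is in hand, the corollary is merely the statement that the $L^{2}$ bound upgrades to an $L^{\infty}$ bound through the maximum principle inequality of Proposition \ref{triple-ineq}.
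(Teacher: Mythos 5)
Your proposal is correct and matches the paper's intended argument exactly: the corollary follows by inserting the $\varepsilon$-independent $L^{2}$ bound of Theorem \ref{L^2-uniform} (valid under the $\tau$-stability hypothesis in force in this section) into the estimate of Proposition \ref{triple-ineq}~(3), whose constants depend only on $g$ and the fixed metric $h_{0}$. Nothing further is needed.
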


\subsection{The Solvability of the Vortex Equation}\label{sol-vol}
In this section, we will show the solvability of the vortex equation \eqref{vortex-eq}.

\begin{lemma}
  For a solution $f=f_\varepsilon$ of the perturbed equation $L_\varepsilon(f)=0$, there exists a $C^0$-convergence $f_0$ such that
  \begin{gather*}
    f_\varepsilon\rightarrow f_0\text{ in }C^0      
  \end{gather*}
  where $f_0$ is a continuous positive Hermitian transformation.
\end{lemma}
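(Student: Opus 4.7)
The plan is to leverage the uniform pointwise bound $\sup_X|\log f_\varepsilon|\le C$ from the preceding corollary to obtain uniform higher-regularity control on $f_\varepsilon$, and then extract a $C^0$-convergent subsequence via Arzel\`a--Ascoli; the limit $f_0$ will automatically be continuous, Hermitian, and positive definite because the uniform two-sided bound $c_1\mathrm{id}_E\le f_\varepsilon\le c_2\mathrm{id}_E$ passes to the limit.

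The first step is to upgrade the $C^0$ bound on $s_\varepsilon:=\log f_\varepsilon$ to a uniform $L^2_1$ bound. On the compact set $\{|x|,|y|\le C\}$ where $C=\sup_\varepsilon\sup_X|s_\varepsilon|$, the function $\Psi$ is bounded below by a positive constant $c>0$, so the energy identity \eqref{eq-1} gives
\[
c\,\|\bar\partial s_\varepsilon\|_{L^2}^2\le\int_X\langle\Psi(s_\varepsilon)(\bar\partial s_\varepsilon),\bar\partial s_\varepsilon\rangle_{h_0,g}\le\int_X|K^0|\,|s_\varepsilon|,
\]
and the right-hand side is uniformly bounded by the $C^0$ bound on $s_\varepsilon$. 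Combined with the uniform $L^\infty$ bound, this shows $\{f_\varepsilon\}$ is uniformly bounded in $L^2_1\cap L^\infty$.

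The second step is to rewrite the perturbed equation in elliptic form and bootstrap. Using $F_{h_0\cdot f}=F_{h_0}+\bar\partial(f^{-1}\circ\partial_0 f)$ and multiplying by $f_\varepsilon$ on the left, the perturbed equation becomes
\[
P_0 f_\varepsilon=f_\varepsilon\circ R_\varepsilon+i\Lambda_g(\bar\partial f_\varepsilon\circ f_\varepsilon^{-1}\circ\partial_0 f_\varepsilon),
\]
where $R_\varepsilon=-i\Lambda_g F_{h_0}-\tfrac12\phi\otimes\phi^*_{h_0\cdot f_\varepsilon}+\tfrac\tau2\mathrm{id}_E-\varepsilon\log f_\varepsilon$ is uniformly bounded in $C^0$ (using $|\log f_\varepsilon|\le C$ and $\varepsilon\le 1$), and the quadratic gradient nonlinearity has coefficients controlled by the uniform bound on $f_\varepsilon^{-1}$. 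Iterating the elliptic $L^p$-estimate for the second-order elliptic operator $P_0$ together with Gagliardo--Nirenberg interpolation, starting from the uniform $L^2_1\cap L^\infty$ bound from the first step, improves the regularity step by step and eventually yields a uniform bound on $\|f_\varepsilon\|_{L^p_2}$ for some $p>2n$. By Sobolev embedding, $\{f_\varepsilon\}$ is uniformly bounded in $C^{1,\alpha}$ for some $\alpha>0$.

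Arzel\`a--Ascoli then produces a subsequence $\varepsilon_i\to 0$ along which $f_{\varepsilon_i}\to f_0$ in $C^0$ (indeed in $C^1$), and passing to the limit in the uniform two-sided bound shows that $f_0$ is a continuous positive Hermitian transformation. The main technical obstacle is the bootstrap in the second step: the nonlinearity $(\bar\partial f_\varepsilon)\circ f_\varepsilon^{-1}\circ(\partial_0 f_\varepsilon)$ is quadratic in first derivatives, which sits at the borderline of what standard elliptic regularity can handle, and closing the iteration requires exploiting the full uniform two-sided $C^0$ bound on $f_\varepsilon$ (both the upper bound and the strictly positive lower bound), not merely an $L^\infty$ bound. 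This adapts the convergence argument in L\"ubke--Teleman \cite{MR1370660} to the present setting, with the extra term $\phi\otimes\phi^*_{h_0\cdot f_\varepsilon}$ causing no new difficulty because it is uniformly bounded in $C^0$ by the preceding corollary.
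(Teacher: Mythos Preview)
Your second step does not close. From $f_\varepsilon\in L^2_1\cap L^\infty$ the quadratic term $\bar\partial f_\varepsilon\circ f_\varepsilon^{-1}\circ\partial_0 f_\varepsilon$ lies only in $L^1$, and the Gagliardo--Nirenberg route you describe yields an estimate of the form
\[
\|f_\varepsilon\|_{L^p_2}\le C\bigl(1+\|f_\varepsilon^{-1}\|_{L^\infty}\|f_\varepsilon\|_{L^\infty}\,\|f_\varepsilon\|_{L^p_2}\bigr),
\]
which can only be absorbed when the $L^\infty$ data are \emph{small}, not merely bounded. A uniform two-sided $C^0$ bound on $f_\varepsilon$ gives no such smallness; for matrix-valued equations with critical quadratic gradient nonlinearity there is no De~Giorgi--Nash--Moser substitute. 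Your reference to L\"ubke--Teleman is also off: their argument does not bootstrap individual $f_\varepsilon$ to $L^p_2$ at this stage.

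The paper avoids this obstruction entirely by proving the sequence is \emph{Cauchy in $C^0$} rather than bounding each term in a higher norm. After extracting an $L^2$-convergent subsequence $f_i$, it writes the perturbed equation for $f_j$ relative to the metric $h_i=h_0\cdot f_i$, so that the unknown becomes the relative endomorphism $f_{ij}=f_i^{-1}\circ f_j$. Pairing with $\log f_{ij}$ and using the inequalities \eqref{calc-ineq} and \eqref{monotone} gives
\[
P\bigl(|\log f_{ij}|_{h_i}^2\bigr)\le C\bigl(\varepsilon_i^2+\varepsilon_j^2+|\log f_{ij}|_{h_i}^2\bigr),
\]
and Theorem~\ref{max-ineq} then yields $\sup_X|\log f_{ij}|^2\le C(\|\log f_{ij}\|_{L^2}^2+(\varepsilon_i+\varepsilon_j)^2)\to 0$. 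The point is that $\log f_{ij}$ is genuinely small in $L^2$, so the Moser-type estimate closes where your bootstrap cannot. Only after this $C^0$-convergence is established does the paper invoke Donaldson's Lemma~\ref{Donaldson} to obtain uniform $L^p_2$ bounds; that lemma requires $C^0$-\emph{convergence} as a hypothesis precisely because its local argument needs $f_\varepsilon$ to be close to a constant on small balls, which $C^0$-boundedness alone does not provide.
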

\begin{proof}
  The perturbed equation $L_\varepsilon(f)=0$ implies
  \begin{gather*}
    \begin{aligned}
      &i\Lambda_g(\bar{\partial}f\circ f^{-1}\circ\partial_0 f)\\
      &\quad =i\Lambda_g\bar{\partial}\partial_0f+f\circ(i\Lambda_g F_{h_0}+\frac{1}{2}\phi\otimes\phi^*_{h_0}\circ f-\frac{\tau}{2}\mathrm{id}_E+\varepsilon\log f).
    \end{aligned}
  \end{gather*}
  By integrating, we have
  \begin{gather*}
    \|f_\varepsilon\|_{L^2_1}\le C\text{ for all }\varepsilon\in (0,1].
  \end{gather*}
  Therefore, there exists an $L^2$ convergent subsequence $\{f_{\varepsilon_i}\}$ such that $f_{\varepsilon_i}\rightarrow f_{\varepsilon_\infty}$. Let us write
  \begin{gather*}
    f_{\varepsilon_i}=:f_i,~h_0\cdot f_i=:h_i,~f_i^{-1}\circ f_j=:f_{ij}\in\mathrm{Herm}^+(h_i),
  \end{gather*}
  and the $L^2$-convergence of $\log f_{ij}$ follows:
  \begin{gather*}
    \|\log f_{ij}\|_{L^2}^2=\int_{X}|\log f_{ij}|_{h_0}^2\le C(\|f_{ij}-\mathrm{id}_E\|_{L^2}^2+\|f_{ji}-\mathrm{id}_E\|_{L^2}^2)\rightarrow 0,
  \end{gather*}
  where we have used the inequality $x^2\le(e^x-1)^2+(e^{-x}-1)^2$ for $x\in\mathbb{R}_{\ge 0}$ to estimate the right-hand side.

  The perturbed equation $L_\varepsilon(f)=0$ for $h_0$ can be rewritten as an equation for $h_i$:
  \begin{gather*}
    \begin{aligned}
      0&=i\Lambda_gF_{h_0\cdot f_j}+\frac{1}{2}\phi\otimes\phi_{h_0\cdot f_j}^*-\frac{\tau}{2}\mathrm{id}_E+\varepsilon_j\log f_j\\
      &=i\Lambda_gF_{h_i}+i\Lambda_g\bar{\partial}(f_{ij}^{-1}\circ\partial_i f_{ij})+\frac{1}{2}\phi\otimes\phi_{h_i\cdot f_{ij}}^*-\frac{\tau}{2}\mathrm{id}_E+\varepsilon_j\log f_j\\
      &=-\varepsilon_i\log f_i+i\Lambda_g\bar{\partial}(f_{ij}^{-1}\circ\partial_i f_{ij})+\frac{1}{2}(\phi\otimes\phi_{h_i\cdot f_{ij}}^*-\phi\otimes\phi_{h_i}^*)+\varepsilon_j\log f_j
    \end{aligned}
  \end{gather*}
  where $\partial_i=\partial_{h_i}$. When in transforming the equation from line 2 to 3, we used the equation $L_{\varepsilon_i}(f_i)=0$. Form the above equation and the inequality \eqref{monotone},
  \begin{gather*}
    h_i(i\Lambda_g\bar{\partial}(f_{ij}^{-1}\circ\partial_i f_{ij}),\log f_{ij})\le h_i(\varepsilon_i\log f_i-\varepsilon_j\log f_j,\log f_{ij}).
  \end{gather*}
  Then from the inequality \eqref{calc-ineq},
  \begin{gather*}
    \begin{aligned}
      P(|\log f_{ij}|_{h_i}^2)&\le 2h_i(i\Lambda_g\bar{\partial}(f_{ij}^{-1}\circ\partial_if_{ij}),\log f_{ij})\\
      &\le 2h_i(\varepsilon_i\log f_i-\varepsilon_j\log f_j,\log f_{ij})\\
      &\le C(\varepsilon_i^2+\varepsilon_j^2+|\log f_{ij}|_{h_i}^2).
    \end{aligned}
  \end{gather*}
  From Theorem \ref{max-ineq} and since $h_0,h_i$ define an equivalent norm,
  \begin{gather*}
     \sup_X|\log f_{ij}|_{h_0}^2\le C\left(\|\log f_{ij}\|_{L^2}^2+(\varepsilon_i+\varepsilon_j)^2\right)\rightarrow 0.
  \end{gather*}
  This implies the existence of a $C^0$-convergence $f_0$.
\end{proof}

The next lemma provides an $L^p_2$-estimate of the solution to the perturbed equation $L_\varepsilon(f)=0$ without requiring the simplicity of vector bundles.

\begin{lemma}[Donaldson {\cite[Lemma 19]{MR765366}}]\label{Donaldson}
  Given the family $\{f_\varepsilon\}_{\varepsilon\in(0,1]}$ of positive Hermitian transformations with respect to $h_0$ satisfying the following two conditions:
  \begin{itemize}
    \item [(1)] There exists a continuous positive Hermitian transformation $f_0$ such that $f_\varepsilon\rightarrow f_0$ in $C^0$,
    \item [(2)] $\sup_X|i\Lambda_g F_{h_0\cdot f_\varepsilon}|\le C_1$ for all $\varepsilon\in(0,1]$.
  \end{itemize}
  Then, the family has a uniform $C^1$-estimate
  \begin{gather*}
     \sup_X|d_0f_\varepsilon|\le C_2\text{ for all }\varepsilon\in(0,1].
  \end{gather*}
  Furthermore, the following uniform $L^p_2$-estimate holds:
  \begin{gather*}
    \|f_\varepsilon\|_{L^p_2}\le C_3\text{ for all }\varepsilon\in(0,1].
  \end{gather*}\qed
\end{lemma}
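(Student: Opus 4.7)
The plan is to obtain the uniform $C^1$-estimate first and then bootstrap to the $L^p_2$-estimate via elliptic regularity. The starting point is that assumption (1) gives uniform two-sided bounds on the eigenvalues of $f_\varepsilon$, so $f_\varepsilon$ and $f_\varepsilon^{-1}$ are uniformly bounded in $C^0$; consequently any norm taken with respect to $h_0$ or $h_0\cdot f_\varepsilon$ is equivalent up to uniform constants, and the quantities $|\partial_0 f_\varepsilon|^2$, $|f_\varepsilon^{-1/2}\circ\partial_0 f_\varepsilon|^2$, $|f_\varepsilon^{-1}\circ\partial_0 f_\varepsilon|^2$ differ only by uniform multiplicative constants.

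For the $C^1$-estimate, I would exploit the Chern curvature identity
\[
i\Lambda_g\bar{\partial}\bigl(f_\varepsilon^{-1}\circ\partial_0 f_\varepsilon\bigr)=i\Lambda_g F_{h_0\cdot f_\varepsilon}-i\Lambda_g F_{h_0},
\]
whose right-hand side is uniformly bounded in $L^\infty$ by assumption (2). Expanding the Leibniz rule on the left and rearranging yields a quasilinear second-order equation of the form
\[
P_0 f_\varepsilon=i\Lambda_g\bigl(\bar{\partial}f_\varepsilon\circ f_\varepsilon^{-1}\circ\partial_0 f_\varepsilon\bigr)+B_\varepsilon,
\]
with $\|B_\varepsilon\|_{C^0}$ uniformly bounded. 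A Bochner/Weitzenb\"{o}ck computation for $|\partial_0 f_\varepsilon|^2_{h_0}$, combined with this equation and the $C^0$ bounds on $f_\varepsilon^{\pm 1}$, should give a differential inequality of the form
\[
P\bigl(|\partial_0 f_\varepsilon|^2\bigr)\le A_1|\partial_0 f_\varepsilon|^2+A_2,
\]
to which Theorem \ref{max-ineq} applies. An $L^1$-bound for $|\partial_0 f_\varepsilon|^2$, which is needed to invoke that theorem, follows by pairing the PDE against $f_\varepsilon-f_0$ and integrating by parts, using $f_\varepsilon\to f_0$ in $C^0$.

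Once $|d_0 f_\varepsilon|$ is uniformly bounded, the right-hand side of the quasilinear equation for $f_\varepsilon$ lies in $L^\infty\hookrightarrow L^p$ uniformly. The standard interior elliptic regularity estimate for the second-order operator $P_0=i\Lambda_g\bar{\partial}\partial_0$ on the compact Hermitian manifold $X$ then gives
\[
\|f_\varepsilon\|_{L^p_2}\le C\bigl(\|P_0 f_\varepsilon\|_{L^p}+\|f_\varepsilon\|_{L^p}\bigr)\le C_3,
\]
which is the desired $L^p_2$-bound.

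The main obstacle I anticipate is the Bochner step in the $C^1$-estimate: the term $i\Lambda_g(\bar{\partial}f_\varepsilon\circ f_\varepsilon^{-1}\circ\partial_0 f_\varepsilon)$ a priori contributes something like $|\partial_0 f_\varepsilon|^3$ to $P(|\partial_0 f_\varepsilon|^2)$, which cannot be absorbed naively by Young's inequality. The standard remedy, due to Donaldson, is to apply $P$ to a well-chosen scalar such as $\mathrm{tr}(f_\varepsilon)$ or a convex function of $f_\varepsilon$ for which positivity of $f_\varepsilon$ produces a coercive gradient term with the correct sign, allowing the offending cubic contribution to be absorbed. The $C^0$-convergence hypothesis (1) is precisely what permits this absorption with uniform constants and rules out concentration of $|d_0 f_\varepsilon|$, which is why it cannot be relaxed to mere $L^2$-convergence.
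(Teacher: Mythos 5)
The paper itself gives no proof of this lemma --- it is quoted from Donaldson \cite[Lemma 19]{MR765366} without argument --- so the relevant comparison is with Donaldson's original proof. Your overall architecture is the right one and matches his: establish the uniform $C^1$ bound first, observe that the quasilinear equation $P_0f_\varepsilon=i\Lambda_g(\bar{\partial}f_\varepsilon\circ f_\varepsilon^{-1}\circ\partial_0f_\varepsilon)+B_\varepsilon$ then has right-hand side uniformly bounded in $L^\infty\hookrightarrow L^p$, and conclude by elliptic regularity. The reduction via $i\Lambda_g\bar{\partial}(f_\varepsilon^{-1}\circ\partial_0f_\varepsilon)=i\Lambda_gF_{h_0\cdot f_\varepsilon}-i\Lambda_gF_{h_0}$ and the norm equivalences coming from the two-sided eigenvalue bounds are also correct.

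The gap is precisely the step you flag as the main obstacle: nothing in the proposal actually absorbs the cubic term. Applying $P$ to $\mathrm{tr}(f_\varepsilon)$ or to a convex scalar function of $f_\varepsilon$ produces a favorable gradient term that is only \emph{quadratic} in $\partial_0f_\varepsilon$, so adding a multiple of it to $P(|\partial_0f_\varepsilon|^2)$ cannot dominate a contribution of size $|\partial_0f_\varepsilon|^3$ where the gradient is large, and Young's inequality does not help, as you yourself note. The mechanism by which hypothesis (1) enters must be made explicit, and there are two standard ways, neither of which appears in your sketch: (i) Donaldson's rescaling argument --- if $m_\varepsilon:=\sup_X|d_0f_\varepsilon|\to\infty$ at points $x_\varepsilon$, dilate coordinates by $m_\varepsilon$; the curvature term scales like $m_\varepsilon^{-2}$, interior $L^p_2$ estimates give a $C^1_{\mathrm{loc}}$ limit $\tilde{f}$ on $\mathbb{C}^n$ with $|d\tilde{f}|(0)=1$ solving the homogeneous equation, while $C^0$ convergence to the continuous $f_0$ forces $\tilde{f}$ to be the constant $f_0(x_0)$, a contradiction; or (ii) a Schoen--Yau type test function such as $|\partial_0f_\varepsilon|^2/(A-\sigma(f_\varepsilon))^2$, where the $C^0$ convergence is used to make the oscillation of $\sigma(f_\varepsilon)$ uniformly small so that the favorable cross term dominates the cubic one. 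Until one of these is written out, the $C^1$ bound, and hence the $L^p_2$ bound downstream of it, is unproved. A secondary flaw: you cannot obtain the $L^1$ bound on $|\partial_0f_\varepsilon|^2$ by pairing the PDE against $f_\varepsilon-f_0$ and integrating by parts, because $f_0$ is only continuous and you have no control of its derivative; the uniform $L^2_1$ bound should instead be taken from the preceding lemma of Section \ref{sol-vol}, where it follows from pairing the perturbed equation with $\log f_\varepsilon$ and using the coercivity of $\left<\Psi(s)(\bar{\partial}s),\bar{\partial}s\right>$ under the uniform $C^0$ bound on $s$.
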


\begin{theorem}
  For the solution $f=f_\varepsilon$ of the perturbed equation $L_\varepsilon(f)=0$ on a $\tau$-stable holomorphic pair $(E,\phi)\rightarrow X$,
  \begin{gather*}
    f_0:=\lim_{\varepsilon\rightarrow 0} f_\varepsilon
  \end{gather*}
  gives a solution for $L_0(f)=0$. In other words, there exists a $\tau$-Hermitian-Einstein metric on the $\tau$-stable holomorphic pair.
\end{theorem}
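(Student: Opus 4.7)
The plan is to verify the hypotheses of Donaldson's Lemma~\ref{Donaldson}, use the resulting uniform $L^p_2$-control to extract a strongly convergent subsequence, pass to the limit in the perturbed equation, and then bootstrap to smoothness via elliptic regularity. All the nontrivial analytic input has already been set up in the preceding subsections; the remaining work is to assemble it correctly.

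Concretely, the preceding lemma yields $f_\varepsilon \to f_0$ in $C^0$ with $f_0$ a continuous positive Hermitian transformation, which is hypothesis (1) of Lemma~\ref{Donaldson}. To check hypothesis (2), I would rewrite $L_\varepsilon(f_\varepsilon)=0$ in the form
\begin{gather*}
  i\Lambda_g F_{h_0\cdot f_\varepsilon}
  = \tfrac{\tau}{2}\mathrm{id}_E
  - \tfrac{1}{2}\phi\otimes\phi^{*}_{h_0\cdot f_\varepsilon}
  - \varepsilon\log f_\varepsilon.
\end{gather*}
By Remark~\ref{norm-controle} and the Corollary following Theorem~\ref{L^2-uniform}, every term on the right is uniformly bounded in $\varepsilon\in(0,1]$, so $\sup_X|i\Lambda_g F_{h_0\cdot f_\varepsilon}| \le C_1$. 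Lemma~\ref{Donaldson} then delivers a uniform estimate $\|f_\varepsilon\|_{L^p_2}\le C$ for any fixed $p>2n$.

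Having the uniform $L^p_2$-bound, I would pass to a weakly convergent subsequence in $L^p_2$; the weak limit must coincide with the $C^0$-limit $f_0$, so $f_0\in L^p_2$. The compact embedding $L^p_2\hookrightarrow C^1$ (for $p>2n$) upgrades this to $f_\varepsilon\to f_0$ in $C^1$ along the subsequence, which lets every nonlinear term of $L_\varepsilon(f_\varepsilon)=0$ converge in $L^p$; the residual term $\varepsilon\log f_\varepsilon$ vanishes uniformly since $\log f_\varepsilon$ is uniformly bounded. Rewriting the equation as a second-order elliptic equation $P_0 f_\varepsilon = R_\varepsilon$ with $R_\varepsilon$ converging in $L^p$, elliptic regularity turns the weak convergence into strong $L^p_2$-convergence and shows $f_0$ satisfies $L_0(f_0)=0$ in the strong sense. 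A standard Schauder bootstrap starting from $f_0\in C^{1,\alpha}$ then promotes $f_0$ to $C^\infty$.

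The main conceptual point — rather than a technical obstacle — is to recognize that $\tau$-stability of $(E,\phi)$ enters the entire construction exactly once, through Theorem~\ref{L^2-uniform}, which converts the a priori $L^2$-bound on $\log f_\varepsilon$ into the uniform $C^0$-bound that feeds Lemma~\ref{Donaldson}; without stability, the dichotomy in the proof of Theorem~\ref{L^2-uniform} would instead produce a destabilizing subsheaf, which is precisely the obstruction to extending the family $\{f_\varepsilon\}$ to $\varepsilon=0$.
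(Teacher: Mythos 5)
Your proposal is correct and follows the same route as the paper: the paper's proof is exactly an appeal to Lemma~\ref{Donaldson} with $p>2n$ followed by elliptic regularity, and you have simply filled in the verification of its hypotheses (the $C^0$-limit from the preceding lemma and the uniform curvature bound from the equation plus the uniform bound on $\log f_\varepsilon$) together with the standard limit-passing and bootstrap details. Nothing is missing.
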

\begin{proof}
  By Lemma \ref{Donaldson} ($p>2n$) and the Elliptic Regularity Theorem, there exists $\lim_{\varepsilon\rightarrow 0}f_\varepsilon=:f_0$, and its limit gives the solution of the equation $L_0(f)=0$.
\end{proof}

\subsection{The Uniqueness of $\tau$-Hermitian-Einstein Metrics}\label{sol-unique}
In this section, we will show the uniqueness of $\tau$-Hermitian-Einstein metric, using the $\phi$-simplicity of the holomorphic pair $(E,\phi)$:

\begin{definition}
  A holomorphic pair $(E,\phi)$ with $\phi\neq 0$ is called {\it$\phi$-simple} if it satisfies the following condition:
  \begin{gather*}
    u\in H^0(\mathrm{End}E),~u(\phi)=0~\Rightarrow~u=0.
  \end{gather*}
\end{definition}

\begin{proposition}
  If a holomorphic pair $(E,\phi)$ is $\phi$-stable, then it is $\phi$-simple.
\end{proposition}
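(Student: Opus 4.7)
The plan is to argue by contradiction: given a nonzero $u\in H^0(\mathrm{End}E)$ with $u(\phi)=0$, I use $u$ to manufacture two saturated subsheaves of $E$ whose slopes simultaneously violate the bounds imposed by $\phi$-stability, producing $\mu_m(\phi)\le\mu_M$ in contradiction to hypothesis.

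First I set up the two subsheaves. Let $G\subset E$ be the saturation of $\ker u$ and $H\subset E$ the saturation of $\mathrm{im}(u)$; both are coherent subsheaves with torsion-free quotient in $E$, and $\mathrm{rank}G+\mathrm{rank}H=r$. Since $u\neq 0$ one has $\mathrm{rank}H\ge 1$, hence $\mathrm{rank}G\le r-1$; since $0\neq\phi\in\ker u\subset G$ one has $\mathrm{rank}G\ge 1$, hence $\mathrm{rank}H\le r-1$. Therefore $G$ is admissible in the definition of $\mu_m(\phi)$, and $H$ in that of $\mu_M$, so
\begin{gather*}
  \mu(E/G)\ge\mu_m(\phi),\qquad \mu(H)\le\mu_M.
\end{gather*}

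The substantive step is the slope comparison $\mu(E/G)\le\mu(H)$. The map $u$ induces an isomorphism of coherent sheaves $\bar u\colon E/\ker u\xrightarrow{\sim}\mathrm{im}(u)$, so in particular $\deg(E/\ker u)=\deg\mathrm{im}(u)$ with matching ranks. Passing from $\ker u$ to $G$ and from $\mathrm{im}(u)$ to $H$ introduces the torsion sheaves $G/\ker u$ and $H/\mathrm{im}(u)$, and on a compact Gauduchon manifold each torsion sheaf has non-negative degree (the determinant of torsion supported in codimension one is effective, while torsion supported in codimension $\ge 2$ contributes zero to the Gauduchon degree). Combined with additivity of the degree on short exact sequences, this gives $\deg(E/G)\le\deg(E/\ker u)$ and $\deg H\ge\deg\mathrm{im}(u)$; dividing by the matching ranks yields
\begin{gather*}
  \mu(E/G)\le\mu(E/\ker u)=\mu(\mathrm{im}(u))\le\mu(H).
\end{gather*}

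Chaining everything produces $\mu_m(\phi)\le\mu(E/G)\le\mu(H)\le\mu_M$, contradicting $\phi$-stability. The only non-trivial ingredient is the algebraic-geometric background on saturations and on degrees of torsion sheaves on compact Gauduchon manifolds, rather than any of the analytic machinery developed earlier in the paper; so I do not anticipate a genuine obstacle beyond careful sheaf-theoretic bookkeeping.
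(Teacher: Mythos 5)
Your argument is correct and is essentially the paper's proof: both rest on the chain $\mu_m(\phi)\le\mu(E/\mathrm{Ker}\,u)=\mu(\mathrm{Im}\,u)\le\mu_M$ extracted from a nonzero holomorphic $u$ with $u(\phi)=0$. You additionally spell out the saturation needed to make $\mathrm{Im}\,u$ admissible in the definition of $\mu_M$, which the paper leaves implicit; note that saturating $\mathrm{Ker}\,u$ is unnecessary, since $E/\mathrm{Ker}\,u\cong\mathrm{Im}\,u\subset E$ is already torsion-free.
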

\begin{proof}
  We prove by contraposition. If we take $0\neq u\in H^0(\mathrm{End}E)$ with $u(\phi)=0$, then $\mu_m(\phi)\le\mu(E/\mathrm{Ker}u)=\mu(\mathrm{Im}u)\le\mu_M$. This implies the $\phi$-unstability of the holomorphic pair $(E,\phi).$
\end{proof}

\begin{proposition}
  The $\tau$-Hermitian-Einstein metrics is unique if $(E,\phi)$ is $\phi$-simple.
\end{proposition}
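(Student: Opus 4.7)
The plan is to write $h_2 = h_1\cdot f$ for the unique positive $h_1$-Hermitian transformation $f$, set $s := \log f$, and aim to show $s = 0$. The strategy parallels the energy identity~\eqref{eq-1} that powered Section~\ref{sol-uniform}: one subtracts the two vortex equations, pairs the difference with $s$, integrates using the Gauduchon hypothesis, and exploits two resulting non-negative quantities.

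First I would subtract the two vortex equations. Using $F_{h_2} = F_{h_1} + \bar{\partial}(f^{-1}\circ\partial_{h_1}f)$ and $\phi\otimes\phi^*_{h_2} = \phi\otimes\phi^*_{h_1}\circ f$, the difference simplifies to
\[
i\Lambda_g\bar{\partial}(f^{-1}\circ\partial_{h_1}f) + \tfrac{1}{2}(\phi\otimes\phi^*_{h_2} - \phi\otimes\phi^*_{h_1}) = 0.
\]
Then I would take the $h_1$-inner product with $s$ and integrate against $\omega_g^n/n!$, applying the Nie--Zhang lemma (with base metric $h_1$) together with the integration by parts justified by $\partial\bar{\partial}\omega_g^{n-1}=0$, exactly as in the derivation of~\eqref{eq-1}. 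The resulting identity reads
\[
\int_X\langle\Psi(s)(\bar{\partial}s),\bar{\partial}s\rangle_{h_1,g} + \tfrac{1}{2}\int_X h_1\bigl(\phi\otimes\phi^*_{h_2}-\phi\otimes\phi^*_{h_1},\,s\bigr) = 0.
\]
Both integrands are pointwise non-negative --- the first because $\Psi(x,y) = (e^{y-x}-1)/(y-x) > 0$, the second by~\eqref{monotone} applied with base metric $h_1$ --- so each vanishes identically.

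From the first vanishing, positivity of $\Psi$ forces $\bar{\partial}s = 0$, so $s$ is a holomorphic endomorphism of $E$. For the second, pointwise equality in~\eqref{monotone} means the auxiliary function $\xi(t) := h_1(\phi\otimes\phi^*_{h_1}\circ e^{ts},\,s)$ is constant in $t$; a direct calculation yields $\xi'(t) = |s\cdot e^{ts/2}\phi|^2_{h_1}$, so in particular $\xi'(0) = 0$ gives $s(\phi) = 0$. The $\phi$-simplicity hypothesis then forces $s = 0$, i.e.\ $h_1 = h_2$. The main technical obstacle I anticipate is the integration by parts required to convert the curvature term into the Nie--Zhang form in the Gauduchon setting, but this is precisely the manoeuvre already used to derive~\eqref{eq-1} just above Theorem~\ref{L^2-uniform}, so it should be routine rather than delicate.
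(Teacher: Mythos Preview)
Your argument is correct and reaches the same endpoint as the paper --- $s=\log f$ is holomorphic with $s(\phi)=0$, whence $s=0$ by $\phi$-simplicity --- but by a genuinely different route. The paper proceeds via a Bochner-type vanishing: it writes $f=g^2$, uses the gauge transformation $\mathrm{Ad}\,g$ to regard $g$ as a holomorphic section of a Hom bundle, computes that the induced mean curvature $K_{\mathrm{End}}$ on $\mathrm{End}\,E$ is semi-negative, and applies Bochner's theorem to conclude $d_{\mathrm{Hom}}g=0$; from this it extracts $\bar\partial(\log f)=0$ and then the identity $\langle K_{\mathrm{End}}(\log f),\log f\rangle=-|\log f(\phi)|^2=0$. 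Your approach instead recycles the integral machinery of Section~\ref{sol-uniform}: subtracting the two vortex equations and pairing with $s$ is exactly the $\varepsilon=0$, $K^0_{h_1}=0$ instance of the derivation of~\eqref{eq-1}, so the Nie--Zhang lemma and the Gauduchon integration by parts carry over verbatim, and the positivity of $\Psi$ together with the monotonicity~\eqref{monotone} force both conclusions at once. Your route is more economical within this paper since it introduces no new tools (no square-root trick, no auxiliary connection, no Bochner formula); the paper's route is the classical differential-geometric argument familiar from the ordinary Hermitian--Einstein uniqueness proof.
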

\begin{proof}
  Let take two $\tau$-Hermitian-Einstein metric $h,~k$ and let $h=k\cdot f,~f=g^2$ for some $f,g\in\mathrm{Herm}^+(k)$. Take the Chern connection $d_k=\partial_k+\bar{\partial}$ for $(k,\bar{\partial})$, and define the new $k$-connection $d^\prime$ by
  \begin{gather*}
      d^\prime:=\partial^\prime_k+\bar{\partial}^\prime,~\partial^\prime_k:=g^{-1}\circ\partial_k\circ g,~\bar{\partial}^\prime:=g\circ\bar{\partial}\circ g^{-1}
  \end{gather*}
  In particular, $d^\prime$ is the Chern connection of $(\bar{\partial}^\prime,k)$. Let $\mathcal{E}=(E,\bar{\partial}),~\mathcal{E}^\prime=(E,\bar{\partial}^\prime)$, then $g$ is a holomorphic section of $\mathrm{Hom}(\mathcal{E},\mathcal{E}^\prime)$. We set $K_E:=i\Lambda_g F_k$, the mean curvature of the $\tau$-Hermitian-Einstein metric $k$
  \begin{gather*}
    \begin{aligned}
      K_{\mathrm{End}}&=K_E\otimes\mathrm{id}_{E^*}+\mathrm{id}_E\otimes K_{E^*}\\
      &=\left(-\frac{1}{2}\phi\otimes\phi^*_k+\frac{\tau}{2}\mathrm{id}_E\right)\otimes\mathrm{id}_{E^*}+\mathrm{id}_E\otimes\left(\frac{1}{2}\phi_k^*\otimes\phi-\frac{\tau}{2}\mathrm{id}_{E^*}\right)\\
      &=-\frac{1}{2}((\phi\otimes\phi^*_k)\otimes\mathrm{id}_{E^*}+\mathrm{id}_E\otimes(\phi_k^*\otimes\phi))\\
      &\le 0\\
    \end{aligned}
  \end{gather*}
  Therefore, applying the Bochner's Vanishing Theorem, we obtain $d_{\mathrm{Hom}}g=0$. Hence we have
  \begin{gather*}
    0=\partial_\mathrm{Hom}g=\partial^\prime_k\circ g-g\circ\partial_k=g^{-1}\circ\partial_kg^2.
  \end{gather*}
  Since $g^2=f$, it follows that $\bar{\partial}(\log f)=0$. Compute the Hermitian transformation by diagonalizing $\log f=\sum\lambda_ie_i\otimes\theta^i$.
  \begin{gather*}
    \begin{aligned}
      K_{\mathrm{End}}(\log f)=-\frac{1}{2}\sum_{i=1}^{r}\lambda_i\bar{\phi}^i(\phi\otimes\theta^i+e_i\otimes\phi^*)
    \end{aligned}
  \end{gather*}
  where $\phi=\sum\phi^ie_i$. And then,
  \begin{gather*}
    \begin{aligned}
      \left<K_{\mathrm{End}}(\log f),\log f\right>_k=-\sum_{i=1}^{r}|\lambda_i|^2|\phi^i|^2.
    \end{aligned}
  \end{gather*}
  Whereas,
  \begin{gather*}
      \log f(\phi)=\sum_{i=1}^{r}\lambda_i\phi^ie_i.
  \end{gather*}
  The left side of the resulting equality $\left<K_{\mathrm{End}}(\log f),\log f\right>_k=-|\log f(\phi)|^2$ is $0$ from the Bochner's Vanishing Theorem. Thus, we get $\log f(\phi)=0$. By the $\phi$-simplicity, we conclude that $\log f=0$, and therefore $f=\mathrm{id}_E$ and $h=k$.
\end{proof}

\section{Higgs Bundles Version}
The existence of a canonical metric on stable Higgs bundles over a compact Gauduchon manifold has been proved by Jacob \cite{MR3372463} using the heat flow method. In this section, we give another proof of his theorem, using the same approach as in the proof of Theorem \ref{main-theorem}.

\begin{theorem}
  If a Higgs bundle $(E,\theta)$ over an $n$-dimensional compact Gauduchon manifold $(X,\omega_g)$ is stable, then there exists a Hermitian-Einstein metric, and it is unique up to a positive scalar.
\end{theorem}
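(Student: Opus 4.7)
The plan is to follow the continuity-method proof of Sections~\ref{sol-perturb}--\ref{sol-unique}, replacing the holomorphic section $\phi$ by the Higgs field $\theta$ and the zeroth-order term $\tfrac{1}{2}\phi\otimes\phi^*_h$ by the commutator $[\theta,\theta^*_h]$. First I would consider the perturbed equation
\begin{gather*}
L_\varepsilon(f) := i\Lambda_g F_{h_0\cdot f} + [\theta,\theta^*_{h_0\cdot f}] - \lambda\,\mathrm{id}_E + \varepsilon\log f = 0,
\end{gather*}
where $\lambda\in\mathbb{R}$ is fixed by the trace condition (since $\mathrm{tr}[\theta,\theta^*_h]=0$, the constant $\lambda$ is forced by $\deg E$). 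An initial solution at $\varepsilon=1$ is produced by the same shift $h_0:=h\cdot e^{K^0_h}$. Openness of $J\subset(0,1]$ follows as in Theorem~\ref{open-condition}, since the linearization of $[\theta,\theta^*_{h_0\cdot f}]$ contributes a non-negative quadratic form playing the role of $\Phi$ in Proposition~\ref{alpha-ineq}. Closedness via Proposition~\ref{L^p_2-estimates} is unaffected because the commutator term obeys the same polynomial $f$-bounds as the pair term $\phi\otimes\phi^*_{h_0}\circ f$.

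The heart of the argument is the uniform $L^2$-bound on $\log f_\varepsilon$, mirroring Section~\ref{sol-uniform}. The required monotonicity $h_0([\theta,\theta^*_{h_0\cdot f}] - [\theta,\theta^*_{h_0}],\log f)\ge 0$ follows by diagonalizing $\log f$ in a unitary frame, in direct parallel to \eqref{monotone}. Assuming $\|\log f_\varepsilon\|_{L^2}\to\infty$, the renormalized limit $u_\infty$ is extracted as in Theorem~\ref{L^2-uniform}, and the projector construction $\pi_i := P_i(u_\infty)$ produces weakly holomorphic subsheaves. The main obstacle, compared with the holomorphic-pair case, will be verifying $\theta$-invariance of these subsheaves, which is required to invoke Higgs stability. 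I plan to handle this by adjoining to the analogue of \eqref{ineq-1} a Higgs-field test term $\int_X\langle\zeta(u_i)(\theta),\theta\rangle$, where $\zeta$ is applied to $u_i$ in the sense of Definition~\ref{fiber-map} and chosen so that, in the limit, the term forces $(\mathrm{id}_E - \pi_i)\circ\theta\circ\pi_i = 0$ in $L^2$. Once $\theta$-invariance is in hand, the slope inequality $\nu\le 0$ analogous to \cite[Lemma~3.12.1]{MR1085139} contradicts the stability of $(E,\theta)$.

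The $C^0$-convergence $f_\varepsilon\to f_0$ then follows exactly as in Section~\ref{sol-vol}, and Lemma~\ref{Donaldson} upgrades it to an $L^p_2$-solution of $L_0(f)=0$, producing the Hermitian-Einstein metric. For uniqueness up to a positive scalar, I would apply the Bochner technique of Section~\ref{sol-unique} to two solutions $h = k\cdot f$ with $f = g^2$: the mean curvature on $\mathrm{End}E$ now contains $[\theta,[\theta^*_k,\cdot]]$-type contributions that pair non-positively with $\log f$, so the vanishing theorem yields both $d_{\mathrm{End}}\log f = 0$ and $[\theta,\log f] = 0$. The eigenspace decomposition of $\log f$ thus splits $E$ holomorphically into $\theta$-invariant subbundles, each of slope $\mu(E)$; strict stability of $(E,\theta)$ permits only a single eigenvalue, so $\log f$ is a real scalar and $h = c\cdot k$ for some $c > 0$.
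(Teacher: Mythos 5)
Your proposal follows the same continuity-method strategy as the paper's Section 4: the same perturbed equation, the same non-negative quadratic term replacing $\Phi$ in the Higgs analogue of Proposition~\ref{alpha-ineq}, the same monotonicity inequality replacing \eqref{monotone}, and the same Bochner-plus-simplicity endgame for uniqueness (your eigenspace-splitting variant is equivalent to invoking simplicity of the stable Higgs bundle directly). The one point where you go beyond the paper is in spelling out how a Simpson-style test term forces $\theta$-invariance of the destabilizing subsheaves $\pi_i$ in the uniform $L^2$-estimate --- a step the paper delegates to the reference \cite{MR4237961} --- and your sketch of that step is the standard, correct one.
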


Taking an appropriate initial metric $h_0$ on $E$, we consider the perturbed equation:
\begin{gather*}
  L_\varepsilon(f)=i\Lambda_g(F_h+[\theta,\theta^*_h])-\lambda\mathrm{id}_E+\varepsilon\log f=0,~0<\varepsilon\le 1.
\end{gather*}
The proof for the solvability is the same as in Section \ref{sol-perturb}. In particular, it suffices to show that Proposition \ref{alpha-ineq} and the inequality \eqref{monotone} also hold for the case of Higgs bundles.

\begin{proposition}
  Let $f\in L^p_k\mathrm{Herm}^+$ and $\varphi\in L^p_k\mathrm{Herm}$ satisfy
  \begin{gather*}
    \hat{L}(\varepsilon,f)=0,~d_2\hat{L}(\varepsilon,f)(\varphi)+\alpha f\circ\log f=0~(\alpha\in\mathbb{R}).
  \end{gather*}
  Then for $\eta:=f^{-1/2}\circ\varphi\circ f^{-1/2}$, we have
  \begin{gather*}
    P(|\eta|^2)+2\varepsilon|\eta|^2+|d^f\eta|^2\le -2\alpha h_0(\log f,\eta).
  \end{gather*}
\end{proposition}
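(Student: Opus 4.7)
The plan is to mirror the proof of Proposition~\ref{alpha-ineq} line by line, with the Higgs bracket $i\Lambda_g[\theta,\theta^*_h]$ playing the structural role of the pair term $\frac{1}{2}\phi\otimes\phi^*_h$. I would keep the same gauge transformation $\mathrm{Ad}\,f^{1/2}$ and the same associated operators $d^f=\bar{\partial}^f+\partial_0^f$ and $P^f:=i\Lambda_g\bar{\partial}^f\partial_0^f$.

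First I would compute $d_2\hat{L}(\varepsilon,f)(\varphi)$ from $\hat{L}=f\circ L_\varepsilon$ together with the assumption $L_\varepsilon(f)=0$. The curvature and $\varepsilon\log f$ contributions are identical to the pair case, producing $f^{1/2}\circ P^f(\eta)\circ f^{1/2}$ and $\varepsilon\varphi$ respectively. For the Higgs term, the relation $\theta^*_h=f^{-1}\circ\theta^*_{h_0}\circ f$ yields
\begin{gather*}
d_2(i\Lambda_g[\theta,\theta^*_h])(\varphi)=i\Lambda_g\bigl[\theta,\,f^{-1}\circ(\theta^*_{h_0}\circ\varphi-\varphi\circ\theta^*_h)\bigr].
\end{gather*}
Applying $f^{-1/2}\circ f\circ(\cdot)\circ f^{-1/2}$, inserting $f^{-1/2}\circ f^{1/2}=\mathrm{id}$ at the inner positions, and using $\varphi=f^{1/2}\circ\eta\circ f^{1/2}$, the Higgs contribution transforms into $i\Lambda_g[\theta^f,[(\theta^f)^*_{h_0},\eta]]$, where $\theta^f:=f^{1/2}\circ\theta\circ f^{-1/2}=\mathrm{Ad}\,f^{1/2}(\theta)$, so that $(\theta^f)^*_{h_0}=f^{-1/2}\circ\theta^*_{h_0}\circ f^{1/2}$. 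Hence the transformed equation is
\begin{gather*}
P^f(\eta)+i\Lambda_g[\theta^f,[(\theta^f)^*_{h_0},\eta]]+\varepsilon\eta=-\alpha\log f.
\end{gather*}

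The hard part, playing the role of the semi-positivity of $\Phi$ in Proposition~\ref{alpha-ineq}, is the symmetrized positivity
\begin{gather*}
h_0\!\bigl(i\Lambda_g[\theta^f,[(\theta^f)^*_{h_0},\eta]],\eta\bigr)+h_0\!\bigl(\eta,(i\Lambda_g[\theta^f,[(\theta^f)^*_{h_0},\eta]])^*_{h_0}\bigr)\ge 0.
\end{gather*}
I would prove this by a fiberwise computation: in an $h_0$-unitary frame with $g_{i\bar{j}}=\delta_{ij}$, writing $\theta^f=\sum_i T_i\,dz^i$ (so $(\theta^f)^*_{h_0}=\sum_i T_i^*\,d\bar{z}^i$), and using Hermiticity of $\eta$, the identity $[A,B]^*_{h_0}=[B^*_{h_0},A^*_{h_0}]$, and cyclicity of the trace, the left-hand side reduces to a positive multiple of $\sum_i|[T_i,\eta]|^2\ge 0$. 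This is the only genuinely new ingredient beyond the proof of Proposition~\ref{alpha-ineq}, and it is a purely algebraic identity.

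Finally, I would take the $h_0$-inner product of the transformed equation with $\eta$, then pair the $h_0$-adjoint of the equation with $\eta$ as well, and add the two. By the positivity just established, the Higgs contribution can be dropped, giving
\begin{gather*}
h_0(P^f(\eta),\eta)+h_0(\eta,P^f(\eta)^*_{h_0})+2\varepsilon|\eta|^2\le -2\alpha h_0(\log f,\eta).
\end{gather*}
Substituting the Bochner-type identity $P(|\eta|^2)=h_0(P^f(\eta),\eta)+h_0(\eta,P^f(\eta)^*_{h_0})-|d^f\eta|^2$, which is purely differential-geometric and unaffected by the Higgs field, yields the claimed inequality.
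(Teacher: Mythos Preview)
Your proof is correct and follows essentially the same route as the paper's: both conjugate by $f^{1/2}$ to obtain $P^f(\eta)+\Theta+\varepsilon\eta=-\alpha\log f$ and then discard the Higgs contribution by the positivity $h_0(\Theta,\eta)=|[f^{1/2}\circ\theta\circ f^{-1/2},\eta]|_{h_0,g}^2\ge 0$, which is exactly your $|[\theta^f,\eta]|^2$. The only cosmetic difference is that you repackage the paper's expression for $\Theta$ as the nested bracket $i\Lambda_g[\theta^f,[(\theta^f)^*_{h_0},\eta]]$ and spell out the coordinate verification, whereas the paper states the identity directly.
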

\begin{proof}
  We use the same notations as in Proposition \ref{alpha-ineq}.
  From the assumption $\hat{L}(\varepsilon,f)=0$ and direct calculation,
  \begin{gather*}
    \begin{aligned}
      d_2\hat{L}(\varepsilon,f)(\varphi)=&f^{\frac{1}{2}}\circ P^f(\eta)\circ f^{\frac{1}{2}}\\
      &\quad +f\circ i\Lambda_g(-[\theta,f^{-1}\circ\varphi\circ f^{-1}\theta^*_{h_0}\circ f]+[\theta,f^{-1}\circ\theta^*_{h_0}\circ\varphi])\\
      &\quad +\varepsilon\varphi.
    \end{aligned}
  \end{gather*}
  Further, from the assumption $d_2\hat{L}(\varepsilon,f)(\varphi)+\alpha f\circ\log f=0$,
  \begin{gather*}
    P^f(\eta)+\Theta+\varepsilon\eta=-\alpha\log f
  \end{gather*}
  where $\Theta:=f^{1/2}\circ i\Lambda_g(-[\theta,f^{-1}\circ\varphi\circ f^{-1}\theta^*_{h_0}\circ f]+[\theta,f^{-1}\circ\theta^*_{h_0}\circ\varphi])\circ f^{-1/2}$. Then
  \begin{gather*}
    \begin{aligned}
      &h_0(P^f(\eta),\eta)+h_0(\Theta,\eta)+\varepsilon h_0(\eta,\eta)=-\alpha h_0(\log f,\eta),\\
      &h_0(\eta,P^f(\eta)^*_{h_0})+h_0(\eta,\Theta)+\varepsilon h_0(\eta,\eta)=-\alpha h_0(\eta,\log f).
    \end{aligned}
  \end{gather*}
  Adding both sides and using $h_0(\Theta,\eta)=h_0(\eta,\Theta)=|[f^{1/2}\circ\theta\circ f^{-1/2},\eta]|_{h_0,g}\ge 0$, we obtain
  \begin{gather*}
    h_0(P^f(\eta),\eta)+h_0(\eta,P^f(\eta)^*_{h_0})+2\varepsilon|\eta|^2\le -2\alpha h_0(\log f,\eta).
  \end{gather*}
  The rest of the proof is similar to Proposition \ref{alpha-ineq}.
\end{proof}

\begin{lemma}
  The function $\xi(t):=h_0(i\Lambda_g[\theta,e^{-ts}\circ\theta^*_{h_0}\circ e^{ts},s])$ where $s=\log f$ is monotonically increasing in $t$. 
\end{lemma}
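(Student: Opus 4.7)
The plan is to establish $\xi'(t)\ge 0$ pointwise on $X$ by a direct calculation in an $h_0$-unitary frame that diagonalizes $s=\log f$. First I would record the $t$-derivative of the conjugate: writing $\theta^*_t:=e^{-ts}\circ\theta^*_{h_0}\circ e^{ts}$, one has
$\frac{d}{dt}\theta^*_t=-s\circ\theta^*_t+\theta^*_t\circ s=[\theta^*_t,s]$,
and since $s$ is self-adjoint, $h_0(A,s)=\mathrm{tr}(A\circ s)$, so
\[
  \xi'(t)=i\Lambda_g\,\mathrm{tr}\bigl([\theta,[\theta^*_t,s]]\circ s\bigr).
\]

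Next I would fix a point $x\in X$ and choose an $h_0$-unitary frame $\{e_i\}$ at $x$ in which $s$ is diagonal with real eigenvalues $\lambda_i$. Writing $\theta=(\theta^i_j)$ as a matrix of local $(1,0)$-forms, one gets $(\theta^*_t)^i_j=e^{t(\lambda_j-\lambda_i)}\,\overline{\theta^j_i}$ and, since $s$ is diagonal, $[\theta^*_t,s]^i_j=(\lambda_j-\lambda_i)(\theta^*_t)^i_j$. Expanding the graded commutator $[\theta,[\theta^*_t,s]]=\theta\wedge[\theta^*_t,s]+[\theta^*_t,s]\wedge\theta$, computing its diagonal entries (using $\overline{\theta^k_i}\wedge\theta^k_i=-\theta^k_i\wedge\overline{\theta^k_i}$), taking the trace against $s$, and relabeling $i\leftrightarrow k$ in one of the two resulting families of terms, the scalar coefficients should combine via $\lambda_i(\lambda_i-\lambda_k)+\lambda_k(\lambda_k-\lambda_i)=(\lambda_i-\lambda_k)^2$ to give
\[
  \mathrm{tr}\bigl([\theta,[\theta^*_t,s]]\circ s\bigr)=\sum_{i,k}(\lambda_i-\lambda_k)^2\,e^{t(\lambda_i-\lambda_k)}\,\theta^i_k\wedge\overline{\theta^i_k}.
\]
Since $i\Lambda_g(\theta^i_k\wedge\overline{\theta^i_k})=|\theta^i_k|^2_g\ge 0$, one concludes
\[
  \xi'(t)=\sum_{i,k}(\lambda_i-\lambda_k)^2\,e^{t(\lambda_i-\lambda_k)}\,|\theta^i_k|^2_g\ge 0,
\]
and the monotonicity follows.

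The underlying mechanism is just the elementary scalar identity $\frac{d}{dt}(xe^{tx})=x^2e^{tx}\ge 0$: the matrix calculation decomposes $\xi(t)$ into a sum of scalar contributions of this type, one per ordered pair of eigenvalues of $s$, weighted by the squared norm of the corresponding off-diagonal component of $\theta$. I do not expect any conceptual obstacle; the only real care required is to use the graded-commutator sign convention $[\theta,\theta^*]=\theta\wedge\theta^*+\theta^*\wedge\theta$ consistently and to track the sign from swapping a $(1,0)$ and a $(0,1)$ form under the wedge. Once these are fixed, the argument is routine index bookkeeping, parallel in spirit to the calculation behind inequality \eqref{monotone} for the holomorphic-section case.
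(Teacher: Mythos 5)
Your proposal is correct and follows essentially the same route as the paper: differentiate in $t$ to get $\frac{d}{dt}\theta^*_t=[\theta^*_t,s]$ and then exhibit $\xi'(t)$ as a pointwise nonnegative quantity. The only difference is presentational — the paper packages the positivity as the single square $\bigl|[s,e^{ts/2}\circ\theta\circ e^{-ts/2}]\bigr|^2_{h_0,g}$, while your eigenframe expansion $\sum_{i,k}(\lambda_i-\lambda_k)^2e^{t(\lambda_i-\lambda_k)}|\theta^i_k|^2_g$ is exactly the component form of that same expression.
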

\begin{proof}
  This follows from a straightforward calculation
  \begin{gather*}
    \begin{aligned}
      \xi^\prime(t)&=i\Lambda_g\left<[\theta,-s\circ e^{-ts}\circ\theta^*_{h_0}\circ e^{ts}+e^{-ts}\circ\theta^*_{h_0}\circ s\circ e^{ts}],s\right>_{h_0}\\
      &=\left|[s,e^{ts/2}\circ\theta\circ e^{-ts/2}]\right|_{h_0,g}^2\ge 0.
    \end{aligned}
  \end{gather*}
\end{proof}

We then consider the uniform estimates of the solution to the perturbed equation. This follows the discussion in Section \ref{sol-uniform} and is similar to C.Zhang-P.Zhang-X.Zhang \cite{MR4237961}. We therefore omit here.

The solvability of the curvature equation follows a method analogous to Section \ref{sol-vol} and is also omitted.

Lastly, we establish the uniqueness of the Hermitian-Einstein metric by the simplicity of the Higgs bundle. The next lemma is obtained in the same way as for holomorphic vector bundles.

\begin{lemma}
  If a Higgs bundle $(E,\theta)$ is stable, then it is simple:
  \begin{gather*}
    u\in\Gamma(\mathrm{End}E),~\bar{\partial}_Eu=[\theta,u]=0~\Rightarrow~u\in\mathbb{C}\cdot\mathrm{id}_E.
  \end{gather*}
\end{lemma}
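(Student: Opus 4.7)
The plan is to adapt the classical argument that a stable holomorphic vector bundle is simple, with Higgs subsheaves (subsheaves $F \subset E$ with $\theta(F) \subset F \otimes \Omega^1_X$) replacing ordinary coherent subsheaves throughout. The only Higgs-flavored ingredient is the commutation hypothesis $[\theta,u]=0$; once that is used to exhibit $\theta$-invariance of eigen-subsheaves, the slope inequalities are exactly as in the holomorphic vector bundle case.

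First I would observe that any $u \in \Gamma(\mathrm{End}\,E)$ satisfying $\bar\partial_E u = 0$ and $[\theta,u]=0$ has constant eigenvalues on $X$: the coefficients of its characteristic polynomial are global holomorphic functions on the compact complex manifold $X$, and hence constants. Fix one eigenvalue $\lambda$ and set $v := u-\lambda\,\mathrm{id}_E$; then $v$ still satisfies $\bar\partial_E v = 0$ and $[\theta,v]=0$. The key step is to check that $\ker v$ and $\mathrm{im}\,v$ are $\theta$-invariant: if $v(s)=0$ locally, then applying the commutation $\theta\circ v-(v\otimes\mathrm{id}_{\Omega^1})\circ\theta=0$ yields $(v\otimes\mathrm{id}_{\Omega^1})(\theta(s))=0$, so $\theta(s)\in \ker v\otimes\Omega^1_X$; the same calculation works for $\mathrm{im}\,v$. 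Taking saturations (which preserve $\theta$-invariance off a codimension-two set and hence do not affect degrees) produces honest Higgs subsheaves in the sense of the stability definition.

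Now argue by contradiction: assume $u \notin \mathbb{C}\cdot\mathrm{id}_E$, so $v\neq 0$. Since $\lambda$ is an eigenvalue $\ker v\neq 0$, and since $v\neq 0$ we have $\ker v\neq E$; hence $\ker v$ is a proper Higgs subsheaf with $0<\mathrm{rank}(\ker v)<r$. Stability of $(E,\theta)$ gives $\mu(\ker v)<\mu(E)$, and additivity of degree on $0\to \ker v\to E\to E/\ker v\to 0$ forces $\mu(E/\ker v)>\mu(E)$. The map $v$ factors through a generic isomorphism $E/\ker v \to \mathrm{im}\,v\subset E$, and saturating $\mathrm{im}\,v$ inside $E$ can only increase the slope; this produces a proper Higgs subsheaf of $E$ with slope strictly greater than $\mu(E)$, contradicting stability. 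Therefore $u$ is a scalar.

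The main obstacle is the bookkeeping step of verifying that the saturation of a $\theta$-invariant subsheaf is again $\theta$-invariant and that the degree computations go through as in the non-Higgs case — this is standard and is exactly what the author means by "obtained in the same way as for holomorphic vector bundles"; no new analytic ingredient beyond the identity $[\theta,u]=0$ is required.
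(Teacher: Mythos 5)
Your argument is correct and is precisely the standard ``stable implies simple'' proof that the paper invokes when it says the lemma is obtained in the same way as for holomorphic vector bundles: constant eigenvalues, $\theta$-invariance of $\ker(u-\lambda\,\mathrm{id}_E)$ and $\mathrm{im}(u-\lambda\,\mathrm{id}_E)$ via $[\theta,u]=0$, and the two slope inequalities forced by stability. No discrepancy with the paper's approach.
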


The following proposition is analogous to the Bochner's Vanishing Theorem, and its proof is omitted.

\begin{proposition}\label{Bochner}
  Let $(E,\theta)$ be a Higgs bundle over a compact Gauduchon manifold $(X,\omega_g)$ with a metric $h$ in $E$, and let $K_h:=i\Lambda_g(F_h+[\theta,\theta^*_h])$. Furthermore, let $D^{\prime\prime}:=\bar{\partial}+\theta,~D^\prime:=\partial_h+\theta^*_h$. If $h\cdot K_h$ is semi-negative definite at each point of $X$, then the following holds:
  \begin{gather*}
    D^{\prime\prime}s=0~\Rightarrow~D^{\prime}s=0 \text{ for }s\in\Gamma(\mathrm{End}E).
  \end{gather*}
\end{proposition}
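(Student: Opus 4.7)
The strategy is to mirror the classical Bochner vanishing theorem for Hermitian vector bundles, replacing the Chern operators $(\bar\partial,\partial_h)$ with the Simpson operators $(D'',D')$ and using the Gauduchon condition in place of the K\"ahler identities.

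\smallskip
\emph{Step 1 (Simpson--Weitzenb\"ock identity).} The first task is to derive the pointwise identity
\begin{gather*}
P(|s|_h^2)=|D''s|_{h,g}^2-|D's|_{h,g}^2+\bigl\langle K_h^{\mathrm{End}}(s),s\bigr\rangle_h
\end{gather*}
for $s\in\Gamma(\mathrm{End}E)$, where $K_h^{\mathrm{End}}$ is the mean curvature on $\mathrm{End}E$ induced from the Higgs bundle structure $(E,\theta,h)$. Starting from $h(s,s)$ and differentiating twice via the induced Chern connection on $\mathrm{End}E$, the $\bar\partial\partial$-term produces a curvature piece $i\Lambda_g F_h^{\mathrm{End}}$ together with a gradient difference $|\bar\partial^{\mathrm{End}}s|^2-|\partial_h^{\mathrm{End}}s|^2$; the $[\theta,\cdot]$ and $[\theta_h^*,\cdot]$ terms arising in $D''$ and $D'$ combine with the induced Higgs-field commutator to produce exactly $i\Lambda_g[\theta^{\mathrm{End}},(\theta^{\mathrm{End}})_h^*]$, assembling the full $K_h^{\mathrm{End}}$. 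The manipulation uses $(D'')^2=0$ (which follows from $\bar\partial\theta=0$ and $\theta\wedge\theta=0$) and the relation $D'D''+D''D'=F_h+[\theta,\theta_h^*]$ on the induced bundle.

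\smallskip
\emph{Step 2 (Transfer of semi-negativity to $\mathrm{End}E$).} Next I would verify that the hypothesis "$h\cdot K_h\le 0$ on $E$" implies $\langle K_h^{\mathrm{End}}(s),s\rangle_h\le 0$. The induced curvature on $\mathrm{End}E=E\otimes E^*$ acts by $K_h^{\mathrm{End}}(s)=K_h\circ s-s\circ K_h$ in the Chern part, and similarly the Higgs commutator contribution is controlled by $K_h$. Diagonalising $K_h$ in an $h$-unitary frame at a fixed point, the quadratic form $\langle K_h^{\mathrm{End}}(s),s\rangle_h$ becomes a sum $\sum_{i,j}(\lambda_i+\lambda_j')|s^j_i|^2$ of weights that are all $\le 0$ by hypothesis; this mirrors the computation used in Section \ref{sol-unique} in the holomorphic pair case.

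\smallskip
\emph{Step 3 (Integration via Gauduchon).} Assuming $D''s=0$, Step 1 yields the pointwise inequality $P(|s|_h^2)\le -|D's|_{h,g}^2\le 0$. Multiplying by $\omega_g^n$ and integrating, an application of Stokes's theorem together with $\partial\bar\partial\omega_g^{n-1}=0$ produces
\begin{gather*}
\int_X P(|s|_h^2)\,\omega_g^n=c\int_X \bar\partial\partial\bigl(|s|_h^2\bigr)\wedge\omega_g^{n-1}=c\int_X|s|_h^2\,\partial\bar\partial\omega_g^{n-1}=0.
\end{gather*}
Combined with pointwise non-positivity, this forces $|D's|_{h,g}^2\equiv 0$, i.e.\ $D's=0$.

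\smallskip
\emph{Main obstacle.} The real technical content lies in Step 1: one has to keep track of four types of terms (Chern curvature, mixed Chern/Higgs cross-terms, Higgs commutators, and gradient norms) and check that all cross-terms reassemble cleanly into $K_h^{\mathrm{End}}$ and into $|D's|^2-|D''s|^2$. This is the Higgs analogue of Nakano's identity, and the signs depend delicately on the bidegree bookkeeping for $\theta$ (type $(1,0)$) versus $\theta_h^*$ (type $(0,1)$). Once the identity is in hand, Steps 2 and 3 are essentially formal and parallel the classical vector bundle Bochner argument on Gauduchon manifolds.
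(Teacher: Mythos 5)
The paper omits its own proof of Proposition \ref{Bochner}, so there is nothing to compare against; judged on its own terms, your architecture (Weitzenb\"ock identity, sign of the curvature term, integration against $\omega_g^{n-1}$ using $\partial\bar\partial\omega_g^{n-1}=0$) is the standard and correct one, and Steps 1 and 3 go through. In Step 1 you only ever need the identity in the case $D''s=0$, where $P(|s|_h^2)=\langle K^{\mathrm{End}}_h(s),s\rangle_h-|D's|^2_{h,g}$ follows from a purely pointwise computation ($\bar\partial\partial_h s=F^{\mathrm{End}}_h s$ once $\bar\partial s=0$, together with $\langle i\Lambda_g[\theta^{\mathrm{End}},(\theta^{\mathrm{End}})^*_h]s,s\rangle_h=|[\theta^*_h,s]|^2-|[\theta,s]|^2$ up to the sign convention) and requires no K\"ahler identities; the general identity with a $|D''s|^2$ term would pick up torsion corrections on a non-K\"ahler manifold, so it is worth restricting to the case actually used. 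Step 3 is fine: $\int_X P(u)\,dV_g=0$ for any $C^2$ function $u$ on a compact Gauduchon manifold, by moving $\bar\partial$ and then $\partial$ onto $\omega_g^{n-1}$ with Stokes.

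The genuine gap is Step 2. It is simply false that $h\cdot K_h\le 0$ on $E$ forces $\langle K^{\mathrm{End}}_h(s),s\rangle_h\le 0$ for all $s\in\mathrm{End}E$: the induced mean curvature acts by $s\mapsto[K_h,s]$, so after diagonalising $K_h=\mathrm{diag}(\lambda_1,\dots,\lambda_r)$ in an $h$-unitary frame the quadratic form is $\sum_{i,j}(\lambda_i-\lambda_j)|s^i_j|^2$ --- the eigenvalues of $K_{E^*}$ are $-\lambda_j$, not a second batch of non-positive numbers --- and the differences $\lambda_i-\lambda_j$ change sign unless $K_h$ is a scalar. Concretely, $K_h=\mathrm{diag}(0,-1)$ and the off-diagonal $s$ with $s^1_2=1$ give $\langle[K_h,s],s\rangle=+1$. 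Your appeal to the computation in Section \ref{sol-unique} does not rescue this: there the author does not transfer semi-negativity from $E$ to $\mathrm{End}E$ but verifies $K_{\mathrm{End}}\le 0$ directly from the Hermitian--Einstein equation, the $\frac{\tau}{2}$-terms cancelling between $E$ and $E^*$. The repair is to interpret (or restate) the hypothesis of Proposition \ref{Bochner} as semi-negativity of the mean curvature of the bundle whose sections appear in the conclusion --- equivalently, prove the vanishing statement for sections of a Higgs bundle $(V,\theta_V)$ with $h\cdot K_V\le 0$ and then apply it to $V=\mathrm{End}E$ with its induced Higgs structure, which is how the paper uses it (there $K_{\mathrm{End}}=0$). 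With that reinterpretation, your Steps 1 and 3 complete the proof and Step 2 becomes vacuous.
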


\begin{proposition}
  The Hermitian-Einstein metric is unique if $(E,\theta)$ is simple.
\end{proposition}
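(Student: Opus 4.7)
The plan is to mimic the uniqueness argument for $\tau$-Hermitian-Einstein metrics on holomorphic pairs given in Section \ref{sol-unique}, using Proposition \ref{Bochner} in place of the classical Bochner vanishing theorem and (plain) simplicity in place of $\phi$-simplicity at the end.

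Let $h$ and $k$ be two Hermitian-Einstein metrics on $(E,\theta)$, write $h=k\cdot f$ with $f\in\mathrm{Herm}^+(k)$, and set $g:=f^{1/2}$. First I introduce the $g$-twisted operators $\bar{\partial}':=g\circ\bar{\partial}\circ g^{-1}$ and $\theta':=g\circ\theta\circ g^{-1}$, so that $(\mathcal{E}',\theta'):=(E,\bar{\partial}',\theta')$ is again a Higgs bundle. The intertwiners $\bar{\partial}'\circ g=g\circ\bar{\partial}$ and $\theta'\circ g=g\circ\theta$ make $g$ a holomorphic isomorphism of Higgs bundles $(\mathcal{E},\theta)\to(\mathcal{E}',\theta')$, while $k(gs,gt)=h(s,t)$ makes it simultaneously an isometry from $(\mathcal{E},\theta,h)$ to $(\mathcal{E}',\theta',k)$. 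Consequently $(\mathcal{E}',\theta',k)$ is Hermitian-Einstein with the same constant $\lambda$.

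Next, regard $g$ as a smooth section of $\mathrm{Hom}(\mathcal{E},\mathcal{E}')$, equipped with the induced Higgs bundle structure and with the metric induced from $k$ on both factors. By construction $D''g=0$, and because both factors are Hermitian-Einstein with the same constant, the induced mean curvature on $\mathrm{Hom}(\mathcal{E},\mathcal{E}')$ equals $\lambda\,\mathrm{id}-\lambda\,\mathrm{id}=0$, which is trivially semi-negative. Applying Proposition \ref{Bochner} (in the straightforward extension from $\mathrm{End}E$ to $\mathrm{Hom}(\mathcal{E},\mathcal{E}')$) then yields $D'g=0$. Unpacking this separates into a $\partial_k$-component $\partial_k f=0$, which by $k$-self-adjointness of $f$ is equivalent to $\bar{\partial}f=0$, and a $\theta^*_k$-component $[f,\theta^*_k]=0$, which on taking the $k$-adjoint is equivalent to $[\theta,f]=0$. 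Therefore $\log f$ is a global holomorphic endomorphism of the Higgs bundle $(E,\theta)$ in the sense of the simplicity definition.

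By the simplicity of $(E,\theta)$, $\log f=c\cdot\mathrm{id}_E$ for some $c\in\mathbb{C}$, and the $k$-Hermitian property of $\log f$ forces $c\in\mathbb{R}$. Hence $h=e^c k$, i.e., uniqueness holds up to a positive real scalar, as claimed. The main obstacle I expect is the bookkeeping required to verify cleanly that $(\mathcal{E}',\theta',k)$ is Hermitian-Einstein with the same constant and that Proposition \ref{Bochner} extends from the original Higgs bundle to its $\mathrm{Hom}$ companion; once these are in hand, extracting $\partial_k f=0$ and $[f,\theta^*_k]=0$ from $D'g=0$ and invoking simplicity is routine.
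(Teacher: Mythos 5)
Your proof is correct and follows essentially the same route as the paper's own argument in the spirit of Section \ref{sol-unique}: twist by $g=f^{1/2}$ so that $D''g=0$, apply Proposition \ref{Bochner} with vanishing induced mean curvature to obtain $D'g=0$, deduce that $f$ commutes with $\bar{\partial}$ and $\theta$, and invoke simplicity. Your write-up is merely more explicit about the $\mathrm{Hom}(\mathcal{E},\mathcal{E}')$ bookkeeping and about unpacking $D'g=0$ into its $\partial_k$- and $\theta^*_k$-components, but the underlying argument is the same.
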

\begin{proof}
  Let us take two Hermitian-Einstein metrics $h,~k$ and let $h=k\cdot f,~f=g^2$ for some $f,g\in\mathrm{Herm}^+(k)$. Furthermore, let $D^{\prime\prime}:=\bar{\partial}+\theta,~D^\prime_k:=\partial_k+\theta^*_k$. Take the connection $D_k=D^\prime_k+D^{\prime\prime}$, and define the new $k$-connection $\tilde{D}_k$ by
  \begin{gather*}
      \tilde{D}_k:=\tilde{D}^\prime_k+\tilde{D}^{\prime\prime},~\tilde{D}^\prime_k:=g^{-1}\circ D^\prime_k\circ g,~\tilde{D}^{\prime\prime}:=g\circ D^{\prime\prime}\circ g^{-1}
  \end{gather*}
  In particular, $g$ is a section of $\mathrm{End}E$ satisfying
  \begin{gather*}
    D^{\prime\prime}_{\mathrm{End}}g=\tilde{D}^{\prime\prime}\circ g-g\circ D^{\prime\prime}=0.
  \end{gather*}
  We set $K_E:=i\Lambda_g(F_k+[\theta,\theta^*_k])$,
  \begin{gather*}
    K_{\mathrm{End}}=K_E\otimes\mathrm{id}_{E^*}+\mathrm{id}_E\otimes K_{E^*}=0.
  \end{gather*}
  Therefore, applying Theorem \ref{Bochner} yields $D^\prime_{\mathrm{End}}g=0$, hence we have
  \begin{gather*}
    0=D^\prime_\mathrm{End}g=\tilde{D}^\prime_k\circ g-g\circ D^\prime_k=g^{-1}\circ D^\prime_kg^2.
  \end{gather*}
  Since $g^2=f$, it follows that $D^{\prime\prime}f=0$. By the simplicity of the Higgs bundle, we conclude that $f\in\mathbb{C}\cdot\mathrm{id}_E$, and therefore $h=ak$ for some $a\in\mathbb{R}_{>0}$.
\end{proof}

\section*{Acknowledgement}
I am deeply grateful to my academic advisor, Professor Nobuhiro Honda, and my laboratory colleagues, Ryota Kotani and Jun Sasaki, for their invaluable guidance on both my research and academic writing.

%The author is grateful to Nobuhiro Honda, Ryota Kotani and Jun Sasaki, who provided valuable feedback on the presentation of this work.

\bibliographystyle{abbrv}
\bibliography{bibtex}

\end{document}